\newtheorem{theorem}{Theorem}[section]
\newtheorem{lemma}{Lemma}[section]
\newtheorem{definition}{Definition}[section]
\newtheorem{remark}{Remark}[section]
\numberwithin{equation}{section}
\newcommand{\R}{\mathbb R}
\newtheorem{thm}{Theorem}[section]
\newtheorem{lem}[thm]{Lemma}
\newtheorem{defn}[thm]{Definition}
\theoremstyle{remark}
\newtheorem{rem}{Remark}[section]
\newtheorem{Cl}{Claim}
\numberwithin{equation}{section}
\newcommand{\al}{\alpha}
\def\bm{\left( \begin{array}{cc}}
\def\endm{\end{array}\right)}
 \providecommand{\abs}[1]{\lvert#1 \rvert}
\newcommand{\be}{\begin{equation}}
\newcommand{\ee}{\end{equation}}
\newcommand{\ba}{\left(\begin{array}{c}}
\newcommand{\ea}{\end{array}\right)}
\newcommand{\bea}{\begin{eqnarray}}
\newcommand{\eea}{\end{eqnarray}}
\newcommand{\bee}{\begin{eqnarray*}}
\newcommand{\eee}{\end{eqnarray*}}
\newcommand{\ben}{\begin{enumerate}}
\newcommand{\een}{\end{enumerate}}
\begin{document}
\pagenumbering{arabic}	
\title[Stability in Half-line]{STABILITY OF KDV SOLITONS ON THE HALF-LINE}

\author{M\'arcio CAVALCANTE}
\address{\emph{Instituto de Matem\'atica, Universidade Federal de Alagoas,\\ Macei\'o-Brazil}}
\email{mcavalcante@dim.uchile.cl}
\let\thefootnote\relax\footnote{AMS Subject Classifications: 35Q55.}
\author{Claudio Mu\~noz}
\address{\emph{CNRS and Departamento de Ingenier\'ia Matem\'atica, Universidad de Chile\\ Santiago, Chile}}
\email{cmunoz@dim.uchile.cl}

\begin{abstract}
	In this paper we study the stability problem for \emph{KdV solitons} on the left and right half-lines. Unlike standard KdV, these are not exact solutions to the equations posed on the half-line, and, contrary to NLS, no exact soliton solution seems to exist. However, we are able to show that solitons posed initially far away from the origin are strongly stable for the problem posed on the right half-line, assuming homogeneous boundary conditions. For the problem posed on the left half-line, the positive infinite-time stability problem makes no sense for the case of KdV solitons, but in this setting we prove a result of stability for all negative times. The proof involves finding and using two almost conserved quantities adapted to the evolution of the KdV soliton in the particular case of the half-line. Adaptations to other boundary conditions or star graphs are also discussed. 
	

\end{abstract}
\maketitle

\section{Introduction}
\medskip

\subsection{Setting of the problem} The Korteweg-de Vries (KdV) \cite{KDV} equation
\begin{equation}\label{KdV}
\partial_tu+\partial_x(\partial_x^2 u+u^2)=0,
\end{equation}
where $u=u(x,t)$ is a real-valued function, and $(x,t)\in \R^2$,
was introduced in 1877 by Boussinesq \cite{bous} as a model for long waves propagating on a
shallow water surface. Boussinesq's main motivation was to derive an equation with a smooth
traveling wave solution modeling the wave observed in 1834 by Scott Russell \cite{SR}. KdV does indeed possess such traveling wave solutions, also known as solitons. The KdV soliton is given by the formula
\begin{equation}
u(x,t)=\widetilde{Q}_c(x-ct-x_0),
\end{equation}
where
\begin{equation}\label{soliton}
\widetilde{Q}_c(s)= \frac{3c}{2}\text{sech}^2\left(\frac{\sqrt{c}s}{2}\right),
\end{equation}
with $c>0$ as the propagation speed of the wave and $x_0$ an arbitrary constant.  The real-line soliton $\widetilde{Q}_c$ satisfies the following ``boundary value problem'' (BVP) on $\R$,
\begin{equation}\label{soliton2}
\begin{cases}
\widetilde{Q}_c''-c\widetilde{Q}_c+\widetilde{Q}_c^2=0,& x,t\in \R,\\
\lim_{\pm \infty}\widetilde{Q}=0,
\end{cases}
\end{equation}
and it is the unique positive $H^1$-solution of \eqref{soliton2} up to translations in space. 
%

\medskip

In a more dynamical context, the initial value problem (IVP) for KdV posed on the real axis,
\begin{equation}\label{IVPKDV}
	\begin{cases}
		\partial_tu+\partial_x(\partial_x^2 u+u^2) =0 ,& (x,t)\in\R\times\R,\\
		u(x,0)=u_0(x),                                   & x\in\R,
	\end{cases}
\end{equation}
 has been extensively studied in the last years. We refer to the reader for instance to the works by Kenig-Ponce-Vega \cite{KPV0,KPV,KPV1}, Bourgain \cite{B}, Colliander, Keel, Staffillani, Takaoka and Tao \cite{KKSTT}, Guo \cite{Guo} and Kishimoto \cite{Kishimoto}, and references therein. For a complete and detailed account, see also the monograph by Linares and Ponce \cite{Linares-Ponce}. In particular, global well-posedness (GWP) does hold for data as far as in the inhomogeneous Sobolev spaces $H^s$, where $s\geq -\frac34.$ Below this regularity, the flow map on the real line $\mathbb R$ is known to be not uniformly continuous \cite{KPV2}.

\medskip

From another point of view, many physical problems arises naturally as initial boundary value problems (IBVP), because of the local character of the corresponding phenomenon \cite{Zabusky}. However, the IBVP for the KdV equation has been considerably less studied than the corresponding IVP \eqref{IVPKDV}. For example, there are at least two interesting IBVP for KdV still in unbounded domains: the one posed on the right half-line, and a second one posed on the left portion of the line, which we consider in this work.

\subsection{Unbounded initial boundary value problems} The IBVP for the KdV equation posed on the {\bf right} half-line is the following: for $\mathbb R^+:= (0,+\infty)$ and \textcolor{black}{$T>0$}, find a solution $u$ to
\begin{equation}\label{IBVP}
\begin{cases}
\partial_tu+\partial_x(\partial_x^2 u+u^2)=0  ,& (x,t)\in \mathbb R^+\times(0,T),\\
u(x,0)=u_0(x),                                   & x\in \mathbb R^+,\\
u(0,t)=f(t),                                     & t\in(0,T),
\end{cases}
\end{equation}
while the IBVP for the KdV equation posed on the {\bf left} half-line is the following: for $\mathbb R^-:= (-\infty,0)$ and \textcolor{black}{$T>0$}, find a solution $u$ to 
\begin{equation}\label{IBVP_left}
\begin{cases}
\partial_tu+\partial_x(\partial_x^2 u+u^2)=0  ,& (x,t)\in\mathbb R^-\times(0,T),\\
u(x,0)=u_0(x),                                   & x\in\mathbb R^-,\\
u(0,t)=f(t),                                     & t\in(0,T),\\
\partial_x u(0,t)=f_1(t),                                   & t\in(0,T).\\
\end{cases}
\end{equation}
Both problems differ in the sense that the one on the left half-line needs an additional boundary condition \textcolor{black}{(see \cite{Holmer} and \cite{Deconinck})}, making this problem more challenging from almost every point of view. As an example, our results differ from \eqref{IBVP} to \eqref{IBVP_left}.  More in general, for IBVPs, an important issue, both from the mathematical and physical point of view, is the study of the effect of the boundary condition(s) at $x=0$ on the asymptotic behavior of the solution.

\begin{rem}\label{leftleft}
	One could also consider the left half-line problem for negative time \begin{equation}\label{IBVPleftleft}
	\begin{cases}
	\partial_tu+\partial_x(\partial_x^2 u+u^2)=0  ,& (x,t)\in \mathbb R^-\times(-T,0),\\
	u(x,0)=u_0(x),                                   & x\in \mathbb R^-,\\
	u(0,t)=f(t),                                     & t\in(-T,0),
	\end{cases}
	\end{equation}
	although this is actually identical to the right half-line problem with positive time \eqref{IBVP} by the transformation $u(x,t)=u(-x,-t)$ (after properly changing $f(t)$).
\end{rem}
\medskip

In the recent literature, the mathematical study of IBVPs \eqref{IBVP} and \eqref{IBVP_left} is usually considered in the following setting 
\begin{equation}\label{setting_A}
(u_0,f)\in H^{s}(\R^+)\times H^{\frac{s+1}{3}}(\R^+),
\end{equation}
or
\begin{equation}\label{setting_B}
(u_0,f,f_1)\in H^{s}(\R^-)\times H^{\frac{s+1}{3}}(\R^+)\times H^{\frac{s}{3}}(\R^+),
\end{equation}
respectively. These assumptions are in some sense sharp because of the following localized smoothing effect for the linear evolution \cite{KPV}
\begin{align*}
&\|\psi(t) e^{-t\partial_x^3}\phi(x)\|_{C\big(\mathbb{R}_x;\; H^{(k+1)/3}(\mathbb{R}_t)\big)}\leq c \|\phi\|_{H^k(\mathbb{R})},\\
\intertext{and}
&\|\psi(t) \partial_xe^{-t\partial_x^3}\phi(x)\|_{C\big(\mathbb{R}_x;\; H^{k/3}(\mathbb{R}_t)\big)}\leq c \|\phi\|_{H^k(\mathbb{R})},
\end{align*}
where $\psi(t)$ is a smooth cutoff function and $e^{-t\partial_x^3}$ denotes the linear homogeneous solution group on $\mathbb{R}$ associated to the linear  KdV equation. Therefore, in what follows we will certainly follows both settings \eqref{setting_A}-\eqref{setting_B}.

\medskip

The mathematical study of the IBVP \eqref{IBVP} began with the work of Ton \cite{Ton}. He showed existence and uniqueness by assuming that the initial datum $u_0$ is smooth and the boundary data is $f=0$. Later, Bona and Winther \cite{BW} considered \eqref{IBVP} and proved global existence and uniqueness solutions in $L_{loc}^{\infty}(\R^{+};H^4(\R^+))$, for data $u_0\in H^4(\R^+)$ and $f\in H_{loc}^{2}(\R^+)$. In \cite{BW2}, they continued the study of \eqref{IBVP} and proved continuous dependence. Next, Faminskii \cite{Fa} considered a generalization of IBVP \eqref{IBVP} and obtained well-posedness in weighted $H^1(\R^+)$ Sobolev spaces. After this work, Bona, Sun and Zhang \cite{BSZ1} obtained \emph{conditional} local well-posedness, in the sense that solutions are only known to be unique if they satisfy some additional auxiliary conditions. This was done for data $u_0\in H^s(\R^+)$ and
$f\in H^{\frac{s+1}{3}}(\R^+)$ with $s>\frac{3}{4}$. They also proved global well-posedness for $u_0\in H^s(\R^+)$ and
$f \in H^{\frac{3s+7}{12}}(\R^+)$, with $1\leq s\leq3$. 

\medskip

The fundamental contribution of Colliander and Kenig \cite{CK} introduced a more dispersive PDE approach for the generalized Korteweg-de Vries (gKdV) equation posed on $\R^+$, based on writing the original IBVP \eqref{IBVP} as a superposition of three initial value problems on $\R\times\R$. In particular, for KdV \eqref{IBVP} this result gives conditional local well-posedness in $L^2(\R^+)\times H^{\frac{1}{3}}(\R^+)$, in which solutions are only known to be unique if they satisfy additional auxiliary conditions.  By the same time, Colliander and Kenig derived a global a priori estimate and for a non-optimal  boundary condition $f\in H^{\frac{7}{12}}(\R^+)$, and a conditional global well-posedness  was obtained  for the case $s=0$.  Recently, the first author \cite{Cavalcante} (using some of the Colliander-Kenig techniques) showed conditional local well-posedness for the IVP associated to the KdV equation on a \emph{simple star graph} given by two positive half-lines and a negative half-line attached in a common vertex.

\medskip

Later, Faminskii \cite{Fa2} improved the global results of \cite{CK} and obtained global results by assuming more natural boundary conditions (See Theorem \ref{well} below for more details). The local well-posedness
of the IBVP \eqref{IBVP} above $s=-\frac{3}{4}$, which is the critical Sobolev exponent for the KdV initial value
problem, was obtained by Holmer \cite{Holmer} and Bona, Sun and Zhang \cite{BSZ2}. Surveys describing these results and others are  \cite{BSZ1} and \cite{Fokas4}. 

\medskip

As for the left half-line case, Holmer \cite{Holmer} obtained local well-posedness in $H^s(\R^+)$ for $s>-\frac{3}{4}$. Then, Faminskii \cite{Fa4} obtained global well-posedness in $H^s(\R^+)$ for $s\geq 0$ for boundary conditions assuming natural conditions (see also Theorem \ref{well-left} for more details). 

\medskip

Another point of view for \eqref{IBVP} and \eqref{IBVP_left} is given by using Inverse Scattering techniques. Fokas \cite{Fokas1} introduced a new approach known as the unified transform method (UTM), which provides a proper generalization of the Inverse Scattering Transform (IST) method for solving IBVPs. For example, it is mentioned in \cite{Fokas2} that, under suitable decay and smoothness assumptions, just as in the infinite-line setting, the solution on the right half-line should describe (for large times) a collection of (standard KdV) solitons traveling at constant speeds. These techniques were further improved in \cite{Fokas4}, where well-posedness is proven in Sobolev spaces using the UTM method.

\medskip

\subsection{Main results} Before stating our main results, we explain the notion of soliton that we will use. First of all, as far as we know, no exact canonical soliton solution is available for problems \eqref{IBVP} and \eqref{IBVP_left}, except for some very particular cases. For example, for a soliton $\widetilde Q_c= \widetilde Q_c(x-ct-x_0)$, and $x_0\in \R$ given, let us define the natural half-line soliton as  
\be\label{Qc}
Q_c= \begin{cases} \widetilde{Q}_c\big|_{\R^+},  \hbox{ right half-line case,}\\
\widetilde{Q}_c\big|_{\R^-}, \hbox{ left half-line case.}
\end{cases}
\ee
where $\widetilde{Q}_c$ is the classical soliton in $\R$ given by the formula \eqref{soliton}. Note that this definition induces a ``natural'' trace $f(t)=\widetilde Q_c(-ct-x_0)$ of $Q_c$ at $x=0$ in \eqref{IBVP} (see e.g. Fokas and Lennels \cite{Fokas3} for more details on this point of view). However, this trace assumption strongly depends on the original soliton itself, and because of some energetic conditions, we will need a more suitable boundary condition which will have important consequences on the stability property.

\medskip

Instead, we will adopt the following approach: given any standard KdV soliton \eqref{soliton}, restricted to the half-line as in \eqref{Qc}, and placed far enough from the corner $x=0$, we will show that this solution is stable in the energy space under perturbations that preserve the zero boundary condition.    

\begin{thm}[Stability for the right half-line]\label{teorema} Let $c>0$ be a given constant. There exist constants $\alpha_0,C_0,L_0>0$ such that, for all $0<\alpha<\alpha_0$, and all $L>L_0$, the following is satisfied. Assume that $u_0\in H^1(\R^+)$ is such that 
	\begin{equation}\label{Sta_right_1}
	\begin{aligned}
	u_0(x=0)= &~ 0,\\
	  \|u_0-Q_c(\cdot-L)\|_{H^1(\R^+)}<&~ \alpha.
	\end{aligned}
	\end{equation}
	Then the solution $u=u(x,t)$ for the IBVP \eqref{IBVP} with boundary data $u(0,t)=f(t)\equiv 0$, given by Theorem \ref{well}, satisfies the global estimate
	\begin{equation}\label{Sta_right_2}
	\sup_{t \geq 0}\|u(t)-Q_c(\cdot-\rho(t)-L)\|_{H^1(\R^+)}<C_0(\alpha+e^{-\sqrt{c}L}),
	\end{equation}
	for a $C^1$-function $\rho(t)\in \R$ satisfying
	\begin{equation}\label{Sta_right_3}
	\sup_{t\geq 0}|\rho'(t)-c|< CC_0\alpha,
	\end{equation}
	for some constant $C>0$.
\end{thm}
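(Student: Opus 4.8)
The plan is to run a bootstrap argument in the spirit of the Cazenave--Lions--Weinstein--Benjamin--Bona orbital stability scheme, with the exact conservation laws of KdV on $\R$ replaced by the \emph{one--sided} almost--conservation laws that the homogeneous boundary condition at $x=0$ forces on $M$ and $E$. Write $\delta:=\alpha+e^{-\sqrt cL}$ and fix a large constant $A$. On the maximal interval $[0,T^{*})$ on which one can write $u(t)=Q_c(\cdot-\rho(t)-L)+z(t)$ with $\|z(t)\|_{H^1(\R^+)}\le A\delta$ and $\rho(t)\ge -L/2$, the implicit function theorem produces a $C^1$ parameter $\rho(t)$ (with $\rho(0)=0$) determined by the orthogonality $\langle z(t),\partial_xQ_c(\cdot-\rho(t)-L)\rangle_{L^2(\R^+)}=0$. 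Differentiating this relation, inserting the equation for $z$, and using that $\partial_xQ_c(\cdot-\rho-L)$ is exponentially concentrated at $x=\rho+L\ge L/2$ gives a modulation estimate $|\rho'(t)-c|\le C(\|z(t)\|_{H^1(\R^+)}+e^{-\sqrt cL})$; once $\|z\|_{H^1}\lesssim\delta$ is proved this is \eqref{Sta_right_3}, and since $\rho'>0$ the soliton only recedes from the corner, so the constraint $\rho(t)\ge -L/2$ is never saturated.

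The structural input is that, because $u(0,t)\equiv 0$, integration by parts yields $\tfrac{d}{dt}M[u(t)]=-u_x(0,t)^2\le 0$ and $\tfrac{d}{dt}E[u(t)]=-\tfrac12u_{xx}(0,t)^2\le 0$, where $M[u]=\int_{\R^+}u^2$ and $E[u]=\int_{\R^+}(\tfrac12u_x^2-\tfrac13u^3)$ (rigorously for smooth compatible data, then for $H^1$ data by the approximation supplied by Theorem \ref{well}); hence the Weinstein functional $\mathcal E:=E+\tfrac c2M$ is nonincreasing. Expanding $\mathcal E$ about $Q_c(\cdot-\rho-L)\big|_{\R^+}$ and using that $\widetilde Q_c$ solves \eqref{soliton2}, so that the first variation only leaves a boundary term $\propto\partial_xQ_c(-\rho-L)\,z(0,t)=O(e^{-2\sqrt cL})$, gives
\be
\tfrac12\langle\mathcal L_cz(t),z(t)\rangle_{L^2(\R^+)}=\mathcal E[u(t)]-\mathcal E[Q_c]+O\big(e^{-2\sqrt cL}+\|z(t)\|_{H^1}^3\big),
\ee
where $\mathcal L_c:=-\partial_x^2+c-2Q_c(\cdot-\rho(t)-L)$; the same computation at $t=0$ gives $\mathcal E[u_0]-\mathcal E[Q_c]=O(\delta^2)$, so monotonicity yields $\langle\mathcal L_cz(t),z(t)\rangle\le C\delta^2(1+A^3\delta)$. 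On the other hand $\mathcal L_c$ has a single negative eigenvalue, kernel $\R Q_c'$, and is otherwise positive, so (the soliton sitting at distance $\ge L/2$ from the corner) the modulation orthogonality gives $\langle\mathcal L_cz,z\rangle\ge\mu_0\|z\|_{H^1(\R^+)}^2-C\langle z,Q_c(\cdot-\rho-L)\rangle^2$ up to exponentially small corrections. The remaining ``mass direction'' is controlled \emph{from above} by monotonicity of $M$, since $\langle z(t),Q_c(\cdot-\rho-L)\rangle=\tfrac12(M[u(t)]-M[Q_c])-\tfrac12\|z(t)\|^2+O(e^{-2\sqrt cL})\le C\delta-\tfrac12\|z(t)\|^2$.

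The genuine obstacle — and the one point where ``far from the corner'' is used decisively — is the matching \emph{lower} bound on the mass direction, equivalently the bound $\int_0^{\infty}u_x(0,t)^2\,dt=M[u_0]-\lim_tM[u(t)]\lesssim\alpha^2+e^{-2\sqrt cL}$ on the mass radiated through the boundary, which then upgrades the above to $\langle z(t),Q_c(\cdot-\rho-L)\rangle^2\le C\delta^2+\|z(t)\|^4$. To obtain this I would introduce an additional, localized almost--conserved mass $\int_{\R^+}u^2\,\psi(x-\rho(t)-L+b)$, with $\psi$ increasing from $0$ to $1$ and $b$ comparable to $L$, so that $\psi$ is exponentially small at $x=0$ and $\equiv 1$ on the soliton: its time derivative has favourable terms $-\rho'(t)\int u^2\psi'$ and $-3\int u_x^2\psi'$, while the error terms are supported either near the corner, where $u=O(\delta)$, or only in the far tail of $\psi'$ overlapping the soliton, which is $O(e^{-\sqrt c(\rho(t)+L)})$ and hence \emph{integrable in time} because $\rho(t)\sim ct$; combined with a localized Kato smoothing estimate bounding $\int_0^{\infty}\!\!\int_{L/4}^{L/2}(u_x^2+u^2)\,dx\,dt$ by $\|z\|_{L^\infty_tH^1}^2+e^{-\sqrt cL}$, this closes the estimate. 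I expect reconciling the exact but one--sided global monotonicities with a localized almost--conservation whose errors do not accumulate over infinite time to be the technical heart of the proof (and the likely reason for the hypothesis $L>L_0$).

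Granting this step, one feeds $\langle\mathcal L_cz,z\rangle\le C\delta^2(1+A^3\delta)$, the coercivity, and $\langle z,Q_c(\cdot-\rho-L)\rangle^2\le C\delta^2+\|z\|^4$ into one another and absorbs $\|z\|^4\le (A\delta)^2\|z\|^2$ for $\delta$ small; this gives $\|z(t)\|_{H^1(\R^+)}\le C_\ast\delta$ with $C_\ast$ \emph{independent of} $A$. Choosing $A=2C_\ast$ strictly improves the bootstrap hypothesis, so $T^{*}=\infty$ and \eqref{Sta_right_2} holds with $C_0=C_\ast$; the modulation estimate then yields \eqref{Sta_right_3}.
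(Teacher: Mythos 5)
Your proposal follows the same skeleton as the paper's proof: a bootstrap in a tubular neighborhood, modulation via the implicit function theorem with the orthogonality $\int_0^{+\infty}z\,Q_c'(\cdot-\rho(t)-L)\,dx=0$, the one-sided monotonicities $M[u](t)\le M[u_0]$ and $E[u](t)\le E[u_0]$ forced by the homogeneous boundary condition (Lemma \ref{dissipative}, proved first for smooth compatible data and then by density using Theorem \ref{well}), the expansion of $E+cM$ around the translated soliton with $O(e^{-2\sqrt{c}L})$ boundary errors, and coercivity of $-\partial_x^2+c-2Q_c$ modulo the $Q_c$ and $Q_c'$ directions. One structural difference: you run the spectral step directly on $L^2(\R^+)$ and invoke the whole-line theory ``up to exponentially small corrections,'' whereas the paper makes this precise through a specific device (Definition \ref{Extension_a}): since $u(0,t)=0$, one extends $u$ (not $z$, whose trace at $x=0$ equals $-Q_c(-\rho(t)-L)\neq 0$, so $z$ admits no $H^1$ zero extension) by zero to $\R$ and sets $\tilde z=\hat u-\widetilde Q_c$, which equals $-\widetilde Q_c$ on $\R^-$ and is exponentially small there (Lemma \ref{elementarlemma}); the standard coercivity for $\mathcal L$ in \eqref{mL} on $\R$ then applies verbatim. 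You should adopt some such extension to make your half-line coercivity claim rigorous.

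The substantive issue is your ``genuine obstacle'': the lower bound on $\int_0^{+\infty}zQ_c\,dx$, equivalently the smallness of the cumulative flux $\int_0^{\infty}(\partial_xu)^2(0,t)\,dt$. The paper does not introduce any localized mass or Kato-smoothing argument here; it asserts the two-sided estimate \eqref{tec6} directly from $M[u](t)\le M[u_0]$ and $M[Q_c](t)=M[Q_c](0)+O(e^{-2\sqrt{c}L})$. As you correctly observe, those ingredients literally produce only the upper bound on $\int zQ_c$, while the coercivity inequality involves $\bigl(\int zQ_c\bigr)^2$ and hence needs both signs (the trivial lower bound $\int zQ_c\gtrsim -\|z\|_{L^2}$ under the bootstrap only returns $\|z\|\lesssim C_0\delta$ and does not improve the constant). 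So you have put your finger on the genuinely delicate point of the whole argument. However, in your proposal this step is only sketched: the localized almost-conserved mass with weight $\psi(x-\rho(t)-L+b)$, the control of its error terms uniformly over the infinite time interval (using $\rho(t)\sim ct$ for integrability), and the localized smoothing estimate are all announced but not carried out, and you yourself write ``granting this step.'' As written the proposal is therefore incomplete at exactly the place you identify as the technical heart; a complete proof must either execute that localized argument in full or explain why the one-sided bound suffices, neither of which is done here.
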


This result shows strong stability of KdV soliton for the IBVP \eqref{IBVP}, which was, as far as we know, an open problem, even in the zero boundary condition case. Note that in Theorem \ref{teorema} we do not prove the existence of a soliton solution for \eqref{IBVP}, instead we show that the KdV soliton is the natural candidate to be the standard object appearing in the long-time dynamics, even if it is not an exact solution of the problem. Moreover, from simple formal computations, we believe that no soliton-like solution exists for \eqref{IBVP} with zero boundary condition, but we have no rigorous proof of this fact.

\medskip

Our proof is in some sense more dynamical than variational, because the half-line border introduces some important restrictions on the dynamics itself, which need to be controlled separately by using a particular extension property, as well as local estimates of the mass and energy of the KdV soliton (see Lemma \ref{dissipative}), which is only an approximate solution of the problem. We recall that in a general setting, \eqref{IBVP} has no conserved quantities, but we are still able to find some almost conserved quantities. For that reason, estimate \eqref{Sta_right_2} in Theorem \ref{teorema} accounts how far the soliton is placed at the initial time. 

\medskip

At the more technical level, we follow the approach introduced by Martel, Merle and Tsai \cite{MMT} for the study of the stability of generalized KdV multi-solitons in the energy space. This approach is based in the introduction of suitable almost conserved quantities and monotonicity properties, which are of proper interest. For the \eqref{IBVP} case, we follow a simplified version described in \cite{Mu}, which deal with the case of soliton-like objects of dispersive problems with no exact soliton solution.  We will take the advantage of a hidden dissipative mechanism of the model introduced through
imposition of the homogeneous boundary condition at $x=0$, see Lemma \ref{dissipative} for more details.

\medskip

Let us mention that the stability of KdV and more general objects is a large research area lasting for the past thirty years and more. Bona, Souganidis and  Strauss \cite{bona1}, Weinstein \cite{Weinstein}, Grillakis, Shatah and Strauss \cite{GSS}, Martel, Merle and Tsai \cite{MMT} and many others are important references in the field. In the case of KdV multi-solitons, it has also been proved stability even for $L^2$ perturbations \cite{AMV}. For a simple introduction to subject, we also refer to the monograph by Angulo \cite{Angulo}, see also \cite{Munoz} for a short review.

\begin{rem}[On the zero boundary condition]
Note that the condition $u(x=0,t)=0$ is assumed because of several important reasons. First of all, from energetic considerations most conserved quantities require the same flux at both sides of the boundary (i.e. zero net flux), and therefore the condition $u(t) \in H^1(\R^+)$ naturally imposes the zero boundary condition at $x=0$. Another reason to impose this condition is the fact that the exact 2-soliton solution of KdV $U(x,t)$ composed of exactly two large solitons but well-separated at the initial time (one on the right half-line, the other one on the left half-line), is an example of nonzero boundary data for which the corresponding evolution on the right half-line is far from being one soliton and a small perturbation. This shows that $f(t)$ could not be ``arbitrary'' (not even small for arbitrarily large times, and not even integrable in time probably). However, we believe that the stability property does hold for any $f(t)$ sufficiently small, as far as one can control a second derivative in space, integrated in time. However, this control should require more regularity on the solution, and therefore higher order conserved quantities. 
\end{rem}


\begin{rem}[On the zero boundary condition, 2]
Note that one may think that $u(x=0,t)=0$ for all $t\geq 0$ could lead to an odd extension of \eqref{IBVP} to the real line, where we know that KdV has stable solitons. However, this approach fails because KdV does not preserve the oddness property.\footnote{Note that the well-known nonlinear Schr\"odinger equation, which has solitary waves, preserves this oddness condition, therefore the odd extension of a zero boundary data is trivial. Additionally, standard conserved quantities such as mass and energy are conserved under the assumption $u(0,t)=0$, a nice property unfortunately not shared by the KdV dynamics.} In Section \ref{Section_4} (Definition \ref{Extension_a}) we will introduce a new extension of $u(x,t)$ to the real line, which has, as far as we understand, no dynamical meaning, but only a variational purpose. In that sense, this extension seems to be the ``least energy extension'' for the problem, that is to say, it acts only at the linear spectral level.  
\end{rem}

\begin{rem}
The evolution of the solution $u(t)$ given in Theorem \ref{teorema}, backwards in time, is an interesting open problem. As far as we know, no stability condition should hold in this situation, because of the collision of the soliton against the zero boundary condition at $x=0$. For instance, take as an example the KdV soliton $\widetilde Q_c(x-ct)$ restricted to the positive half-line, and with boundary condition $\widetilde Q_c(-ct)$. Then the evolution in time $t<0$ of  $\widetilde Q_c$ clearly shows that the solution must disappear inside the region $x<0$, a fact that reveals that the total mass (the $L^2$ norm squared) of the solution is decreasing to zero. Consequently, no stability of the soliton should hold in this particular regime. This simple example reveals that for problems posed on half-lines, conservation of physical quantities (if any) are essential to rule out some strange phenomena, usually not observed in the case of the whole real line.
\end{rem}

\begin{rem}
We also believe that KdV multi-solitons should also be stable if they are well-decoupled and ordered at the initial time, just as in Martel, Merle and Tsai's work \cite{MMT}. This result will certainly add more evidence for the fact that the soliton resolution conjecture for \eqref{IBVP} should be played by KdV solitons at first.
\end{rem}

\medskip



	From Remark \ref{leftleft} we have that a similar result is valid for the IBVP \eqref{IBVPleftleft}. We state without proofs the following corollary.
	\begin{thm}[Stability for the left half-line and negative time]\label{teorema_left} Let $c>0$ be a given constant. There exist $\alpha_0,C_0,L_0>0$ such that, for all $0<\alpha<\alpha_0$, and all $L>L_0$, the following is satisfied. Assume that $u_0\in H^1(\R^-)$ is such that 
		\begin{equation}
			\begin{aligned}
			u_0(x=0)=&~ 0, \\
			\|u_0-Q_c(\cdot + L)\|_{H^1(\R^-)}< & ~ \alpha.
			\end{aligned}
			\end{equation}
			Then the solution $u(x,t)$ for the IBVP \eqref{IBVPleftleft}, with boundary data $u(0,t)=f(t)\equiv 0$ given by Remark \ref{leftleft}, satisfies the global estimate
			\begin{equation}
			\sup_{t \leq 0}\|u(t)-Q_c(\cdot-\rho(t)+L)\|_{H^1(\R^-)}<C_0(\alpha+e^{-\sqrt{c}L}),
			\end{equation}
			for a $C^1$-function $\rho(t)\in \R$ satisfying
			\begin{equation}
			\sup_{t\leq 0}|\rho'(t)-c|< CC_0\alpha,
			\end{equation}
			for some constant $C>0$.
		\end{thm}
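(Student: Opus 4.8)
The plan is to deduce Theorem \ref{teorema_left} from Theorem \ref{teorema} via the reflection symmetry recorded in Remark \ref{leftleft}, transporting both hypotheses and conclusion across the change of variables. First I would record that $(x,t)\mapsto(-x,-t)$ is a symmetry of \eqref{KdV}: if $u$ solves KdV, then $v(x,s):=u(-x,-s)$ does too, since $\partial_s v=-(\partial_t u)(-x,-s)$, $\partial_x^3 v=-(\partial_x^3 u)(-x,-s)$ and $\partial_x(v^2)=-(\partial_x(u^2))(-x,-s)$, so the equation reproduces itself up to a global sign. This map sends $\R^-\times(-T,0)$ onto $\R^+\times(0,T)$ and the boundary value $u(0,t)=f(t)$ onto $v(0,s)=f(-s)$; in particular the homogeneous condition $f\equiv0$ is preserved, and no second boundary datum (unlike the forward problem \eqref{IBVP_left}) is created. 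Hence \eqref{IBVPleftleft} with $f\equiv0$ is equivalent to \eqref{IBVP} with $f\equiv0$, and the solution in Theorem \ref{teorema_left} is, after reflection, exactly the solution furnished by Theorem \ref{well}.

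Next I would translate the initial data and the soliton. Since $\widetilde Q_c$ is even (it is the $\sech^2$ profile \eqref{soliton}), for $x>0$ one has $\widetilde Q_c(-x+L)=\widetilde Q_c(x-L)$, so $Q_c(\cdot+L)$ on $\R^-$ is reflected onto $Q_c(\cdot-L)$ on $\R^+$ in the sense of \eqref{Qc}. Setting $v_0(x):=u_0(-x)$ for $x>0$, the hypotheses of Theorem \ref{teorema_left} give $v_0(0)=u_0(0)=0$, and since $x\mapsto -x$ is an isometry between $H^1(\R^-)$ and $H^1(\R^+)$,
\[
\|v_0-Q_c(\cdot-L)\|_{H^1(\R^+)}=\|u_0-Q_c(\cdot+L)\|_{H^1(\R^-)}<\alpha .
\]
Thus $v_0$ satisfies the hypotheses of Theorem \ref{teorema} with the same $\alpha$ and $L$, and one may take the very same constants $\alpha_0,C_0,L_0$.

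Finally I would apply Theorem \ref{teorema} to $v$ and undo the change of variables. Theorem \ref{teorema} produces a $C^1$ shift $\rho_+\colon[0,\infty)\to\R$ with $\sup_{s\ge0}|\rho_+'(s)-c|<CC_0\alpha$ and
\[
\sup_{s\ge0}\|v(s)-Q_c(\cdot-\rho_+(s)-L)\|_{H^1(\R^+)}<C_0(\alpha+e^{-\sqrt cL}).
\]
Defining $\rho(t):=-\rho_+(-t)$ for $t\le0$ (so $\rho$ is $C^1$ with $\sup_{t\le0}|\rho'(t)-c|=\sup_{s\ge0}|\rho_+'(s)-c|<CC_0\alpha$) and using once more the evenness of $\widetilde Q_c$ together with the reflection isometry, one checks that the reflected profile $Q_c(\cdot-\rho_+(-t)-L)$ on $\R^+$ becomes exactly $Q_c(\cdot-\rho(t)+L)$ on $\R^-$, whence for every $t\le0$,
\[
\|u(t)-Q_c(\cdot-\rho(t)+L)\|_{H^1(\R^-)}=\|v(-t)-Q_c(\cdot-\rho_+(-t)-L)\|_{H^1(\R^+)}<C_0(\alpha+e^{-\sqrt cL}),
\]
and taking the supremum over $t\le0$ gives the claim.

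There is no genuine analytic obstacle here, as everything is inherited from Theorem \ref{teorema}; the only point requiring care is the bookkeeping of signs and orientations — that the reflection $x\mapsto -x$ combined with the evenness of $\widetilde Q_c$ converts a soliton drifting to $-\infty$ as $t\to-\infty$ on $\R^-$ into one drifting to $+\infty$ as $s\to+\infty$ on $\R^+$, keeping the translation parameter $L$ on the correct side of the origin, and that the (homogeneous) boundary condition transforms consistently without generating the extra trace needed for the forward left half-line problem.
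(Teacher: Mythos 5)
Your proof is correct and is exactly the argument the paper intends: the authors state Theorem \ref{teorema_left} without proof precisely because it follows from Theorem \ref{teorema} via the reflection $u(x,t)\mapsto u(-x,-t)$ of Remark \ref{leftleft}, combined with the evenness of $\widetilde Q_c$. Your bookkeeping of the sign conventions (in particular $\rho(t)=-\rho_+(-t)$ and the transformation of the homogeneous boundary datum without creating a second trace condition) is accurate.
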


Note that this result is only valid if the time is taken backwards, in which case the soliton moves to the left, away from the corner $x=0$. We also recall that the stability problem forward in time makes no sense (at least for unbounded intervals of time) because of the collision between the soliton and the $x=0$ corner.

\medskip

From previous results above we have strong evidence that KdV solitons are part of the long time evolution in \eqref{IBVP} and \eqref{IBVP_left}. However, one may ask for \emph{the} natural exact solution of \eqref{IBVP} that replaces the (nonexact) soliton $Q_c$ and moves forward in time, as the usual KdV soliton does. This ``generalized soliton solution'' should be exact in the sense that solves the equation \eqref{IBVP}, and converges to a KdV soliton. The existence of this solution remains an interesting open problem. The main difficulty to prove such a result is the lack of $H^1$ control of the energy of the solution as the soliton solution approaches the $x=0$ corner. See Lemma \ref{dissipative} for more details.
\medskip

Our results can be recast as part of a series of recent works on the stability of solitonic structures on star graphs, most of them related to the nonlinear Schr\"odinger equation (NLS). For stability results, see the works of Adami, Cacciapuoti, Finco and Noja \cite{Noja1,Noja2}, and the recent papers \cite{Noja3} and \cite{Pava}. The stability for the logarithmic NLS equation on star graphs was obtained by Ardila \cite{Ardila}. A result of instability for the half-soliton stationary state for the NLS equation, with power nonlinearity, on a star graph, was obtained recently by  Kairzhan-Pelinovsky \cite{Pelinovsky}. See also \cite{Marzuola} for other recents developments in this area. Following \cite{Cavalcante}, we expect to deal with the corresponding \emph{KdV graph problem} in the near future. Additionally, the so-called \emph{Kirkchoff boundary condition} found in all above NLS papers, when taken in the KdV case, and to the limit case of only one half-line, naturally converges to the zero boundary condition used in this paper. However, let us emphasize that KdV on half-lines or simple nontrivial graphs, unlike NLS, has no conserved quantities unless we impose some very restrictive boundary conditions, which are not necessarily in agreement with the expected local well-posedness theory. This big difference is reflected, for instance, in the fact that no simple solitary waves seems to exist for KdV in half-lines. Finally, for the extension of the NLS fast soliton problem \cite{Holmer2} to the graph case, which is essentially a time-depending, long-time stability problem, see \cite{Adami}.

\subsection{Comments on the KdV equation on graphs}
	Let us expand further the previous comment. Consider the IBVP associated to the KdV equation on the simple star graph, given by two positive half-lines, i.e. 
	\begin{equation}\label{KDVgraph}
	\begin{cases}
	\partial_tu+\partial_x(\partial_x^2u+u^2)=0,& (x,t)\in(0,+\infty)\times (0,T),\\
		\partial_tv+\partial_x(\partial_x^2v+v^2)=0,& (x,t)\in(0,+\infty)\times (0,T),
	\end{cases}
	\end{equation}
	with initial conditions (see Fig. \ref{Fig1} below)
	\[
	u(x,0)=u_0(x),\;v(x,0)=v_0(x).
	\]
	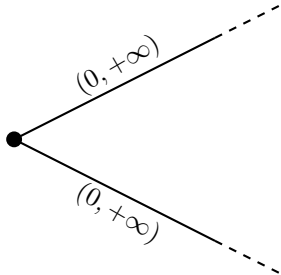
\begin{figure}[h!]
		\centering 
		\begin{tikzpicture}[scale=3]
		\draw[thick](0,0)--(0.89,0.45);
		\node at (0.46,0.33)[rotate=30]{$(0,+\infty)$};
		\draw [thick, dashed] (0.89,0.45)--(1.19,0.6);
		\draw[thick](0,0)--(0.89,-0.45);
		\node at (0.46,-0.33)[rotate=-30]{$(0,+\infty)$};
		\draw [thick, dashed] (0.89,-0.45)--(1.19,-0.6);
		\fill (0,0)  circle[radius=1pt];
		\end{tikzpicture}
		\caption{A star graph with two bonds}\label{Fig1}
	\end{figure}
	
	By direct application the ideas used in \cite{Cavalcante} and \cite{Holmer} we believe that the following two boundary conditions 
	\[
	u(0,t)=-v(0,t),
	\] 
	\[
	u_{xx}(0,t)=v_{xx}(0,t);
	\]
	are sufficient to ensure local well-posedness. On the other hand, formal computations of the mass and energy for IBVP \eqref{KDVgraph}, in the spirit of Lemma \ref{dissipative} below, suggest that the additional BC
%
\[
u_x(0,t)=v_x(0,t),
\]
must be imposed in order to ensure stability as in the proof of Theorem \ref{teorema}. Whether or not these joint three conditions ensure LWP is an open problem, to be discussed elsewhere. Note finally that the same problem for the KdV graph composed by a positive and negative half line is simply KdV on $\R$. Coincidentally, both mass and energy are conserved in this case.

\medskip

Lastly, we emphasize that most of these results can be readily extended to more general subcritical KdV models, provided a suitable Cauchy theory is at hand (in particular, a Cauchy theory dealing with high order traces of the solution at $x=0$, as in Theorem \ref{well}).

\subsection{Organization of this paper} This paper is organized as follows. In Section \ref{Section_2}, we recall the necessary methods and results about LWP and existence of trace estimates that we will need through this paper. Section \ref{Section_3} is devoted to establish some dispersive properties for the solutions of IBVPs \eqref{IBVP} \textcolor{black}{and \eqref{IBVP_left}}, involving the Mass and Energy of solutions. In Section \ref{Section_4} we prove Theorem \ref{teorema}.

\subsection{Acknowledgments}

M. C. wishes to thank the Centro de Modelamiento Matem\'atico (CMM) and Universidad de Chile and N\'ucleo Milenio CAPDE, for the financial support and nice scientific infrastructure that allowed to conclude the
paper during his postdoctoral stay. C.M. is partially funded by CMM Fondo Basal, Fondecyt No. 1150202, Nucleo Milenio CAPDE and MathAmSud EEQUADD 16Math-01. 

\section{Preliminaries}\label{Section_2}

In this Section we recall some well-known facts about the functional setting of \eqref{IBVP} and \eqref{IBVP_left}. For more details, the reader can consult Holmer's paper \cite{Holmer}. 

\subsection{Function Spaces}

For $s\geq 0$ we say that $\phi \in H^s(\mathbb{R}^+)$ if there exists $\tilde{\phi}\in H^s(\mathbb{R})$ such that 
$\phi=\tilde{\phi}|_{\R+}$.  In this case we set $\|\phi\|_{H^s(\mathbb{R}^+)}:=\inf\limits_{\tilde{\phi}}\|\tilde{\phi}\|_{H^{s}(\mathbb{R})}$. For $s\geq 0$ define 
$$
H_0^s(\mathbb{R}^+)=\Big\{\phi \in H^{s}(\mathbb{R}^+);\,\text{supp} (\phi) \subset[0,+\infty) \Big\}.
$$
 For $s<0$, define $H^s(\mathbb{R}^+)$ and $H_0^s(\mathbb{R}^+)$  as the dual space of $H_0^{-s}(\mathbb{R}^+)$ and  $H^{-s}(\mathbb{R}^+)$, respectively. Also, define 
$$
C_0^{\infty}(\mathbb{R}^+)=\Big\{\phi\in C^{\infty}(\mathbb{R});\, \text{supp}(\phi) \subset [0,+\infty)\Big\},
$$
and $C_{0,c}^{\infty}(\mathbb{R}^+)$ as those members of $C_0^{\infty}(\mathbb{R}^+)$ with compact support. We recall that $C_{0,c}^{\infty}(\mathbb{R}^+)$ is dense in $H_0^s(\mathbb{R}^+)$ for all $s\in \mathbb{R}$. 

\medskip

A definition for $H^s(\R^-)$ and $H_0^s(\R^-)$ is given analogously to that of $H^s(\R^+)$ and $H_0^s(\R^+)$. The following result summarize a pair of useful properties of restriction Sobolev spaces.



\begin{lemma}[\cite{CK}, Lemma 3.7]\label{alta}
	If $\frac{1}{2}<s<\frac{3}{2}$, the following statements are valid:
	\begin{enumerate}
		\item [(a)] $H_0^s(\R^+)=\big\{f\in H^s(\R^+);f(0)=0\big\},$\medskip
		\item [(b)] If  $f\in H^s(\R^+)$ with $f(0)=0$, then $\|\chi_{(0,+\infty)}f\|_{H_0^s(\R^+)}\leq c \|f\|_{H^s(\R^+)}$, where $\chi_{(0,+\infty)}$ denotes the characteristic function of $(0,+\infty)$.
	\end{enumerate}
\end{lemma}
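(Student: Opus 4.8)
The plan is to prove the two assertions simultaneously; the only nontrivial ingredient is the behaviour of multiplication by the Heaviside function $\chi_{(0,+\infty)}$ on Sobolev spaces $H^\sigma(\R)$ with $|\sigma|<\tfrac12$, and this is precisely where the hypothesis $\tfrac12<s<\tfrac32$ enters, since it forces $\sigma:=s-1\in(-\tfrac12,\tfrac12)$.

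First, the easy inclusion in (a). If $f\in H_0^s(\R^+)$, then by the density of $C_{0,c}^\infty(\R^+)$ in $H_0^s(\R^+)$ recalled above, there are $\phi_n\in C_{0,c}^\infty(\R^+)$ with $\phi_n\to f$ in $H^s(\R^+)$; each $\phi_n$ vanishes on $(-\infty,0)$, hence $\phi_n(0)=0$. Since $s>\tfrac12$, the embedding $H^s(\R^+)\hookrightarrow C(\R^+)\cap L^\infty(\R^+)$ makes the convergence uniform, so $f(0)=\lim_n\phi_n(0)=0$; equivalently, the zero-extension $\tilde f$ of $f$ lies in $H^s(\R)$ and is continuous, and $\tilde f\equiv0$ on $(-\infty,0)$ forces $\tilde f(0)=0$.

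For the reverse inclusion in (a) and for the bound in (b), let $f\in H^s(\R^+)$ with $f(0)=0$, and choose an extension $\tilde f\in H^s(\R)$ with $\|\tilde f\|_{H^s(\R)}\le 2\|f\|_{H^s(\R^+)}$. I would show that the zero-extension $E_0f=\chi_{(0,+\infty)}\tilde f$ belongs to $H^s(\R)$ with norm $\lesssim\|f\|_{H^s(\R^+)}$; this certifies $f\in H_0^s(\R^+)$ and yields (b), since $\|\chi_{(0,+\infty)}f\|_{H_0^s(\R^+)}$ is controlled by $\|E_0f\|_{H^s(\R)}$. I would use the norm equivalence $\|g\|_{H^s(\R)}^2\simeq\|g\|_{L^2(\R)}^2+\|\partial_xg\|_{H^{s-1}(\R)}^2$, valid for every $s$. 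The $L^2$ part is immediate: $\|E_0f\|_{L^2(\R)}=\|f\|_{L^2(\R^+)}\le\|\tilde f\|_{L^2(\R)}$. For the derivative, since $\tilde f$ is continuous with $\tilde f(0)=f(0)=0$, the Leibniz rule in the sense of distributions gives
\[
\partial_x(E_0f)=\partial_x(\chi_{(0,+\infty)}\tilde f)=\chi_{(0,+\infty)}\,\partial_x\tilde f+\tilde f(0)\,\delta_0=\chi_{(0,+\infty)}\,\partial_x\tilde f .
\]
Now $\partial_x\tilde f\in H^{s-1}(\R)$ and $|s-1|<\tfrac12$, so multiplication by $\chi_{(0,+\infty)}$ is bounded on $H^{s-1}(\R)$, whence $\|\partial_x(E_0f)\|_{H^{s-1}(\R)}\le C\|\partial_x\tilde f\|_{H^{s-1}(\R)}\le C\|\tilde f\|_{H^s(\R)}$. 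Combining, $\|E_0f\|_{H^s(\R)}\le C\|\tilde f\|_{H^s(\R)}\le 2C\|f\|_{H^s(\R^+)}$.

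The main obstacle — in fact the only point requiring real work — is the boundedness of multiplication by $\chi_{(0,+\infty)}$ on $H^\sigma(\R)$ for $|\sigma|<\tfrac12$. This is classical, and I would either cite it from the low-regularity restriction-space lemmas preceding \cite[Lemma 3.7]{CK}, or reduce it to the companion facts that $H_0^\sigma(\R^+)=H^\sigma(\R^+)$ with equivalent norms for $|\sigma|<\tfrac12$ and that extension-by-zero is bounded $H^\sigma(\R^+)\to H^\sigma(\R)$ there, the latter following from the Hardy-type inequality $\int_0^\infty|g(x)|^2x^{-2\sigma}\,dx\lesssim\|g\|_{\dot H^\sigma(\R)}^2$. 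I would also note that $s<\tfrac32$ is sharp for the statement: for $s\ge\tfrac32$ a second differentiation produces a term $\tilde f'(0)\,\delta_0$ with generically nonzero coefficient, so $E_0f\notin H^s(\R)$ in general, and one would additionally have to impose $f'(0)=0$.
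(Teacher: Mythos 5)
The paper offers no proof of this lemma --- it is quoted directly from Colliander--Kenig \cite{CK}, Lemma 3.7 --- so there is no internal argument to compare against; your proof is correct and is essentially the standard one underlying that reference. The reduction via $\partial_x\bigl(\chi_{(0,+\infty)}\tilde f\bigr)=\chi_{(0,+\infty)}\partial_x\tilde f$ (the $\delta_0$ term dropping out because $f(0)=0$, and the whole identity justified by smooth approximation) to the boundedness of multiplication by $\chi_{(0,+\infty)}$ on $H^{\sigma}(\R)$ for $|\sigma|<\tfrac12$ is exactly the right mechanism, and the single ingredient you cite rather than prove --- that multiplier bound, which is also what gives meaning to the product $\chi_{(0,+\infty)}\partial_x\tilde f$ when $s<1$ --- is precisely the companion lemma preceding Lemma 3.7 in \cite{CK}, so nothing essential is missing.
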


\medskip

\subsection{Existence and continuity for the right half-line}

 For the purposes of this paper, we need some results of existence and continuity for the IBVPs \eqref{IBVP} and \eqref{IBVP_left}.
 
\begin{definition}
For any $T>0$ and $s\geq 0$, let $Z_T^s(\R^+)$ be the space given by the functions $u(x,t)$ satisfying
\begin{equation}\label{regularity}
\begin{split}
&\partial_t^m u\in C([0,T];H^{s-3m}(\R^+))\; ~\text{for any integer}\;~0\leq m\leq\frac{s}{3},\\
&\partial_x^l u\in C(\R^+;H^\frac{s-l+1}{3}(0,T))\; ~\text{for any integer}~\;0\leq l\leq s+1.
\end{split}
\end{equation}
\end{definition} 

A definition for $Z_T^s(\R^-)$ can be given analogous to that for $Z_T^s(\R^+)$.

 \begin{theorem}[Faminskii, \cite{Fa2}]\label{well}
 Consider the IBVP on the right half-line \eqref{IBVP}.  Fix a time $T>0$ as in \eqref{IBVP}. Let $u_0\in H^s(\R^+)$ and $f\in H^{\frac{s+1}{3}+\epsilon}(\R^+)$, such that the following conditions are satisfied:
 
 \ben
 \item The regularity $s\geq0$ is such that $\frac{s}{3}-\frac{1}{6}$ is not an integer,
 \item The parameter $\epsilon>0$ is arbitrary small in the case $s<1$, and $\epsilon$ can be taken equals zero in the case $s\geq1$. 
 \item The boundary datum $f$ satisfies the compatibility conditions 
 \[
 f^{(m)}(t=0)=\phi_m(x=0), \quad \hbox{for any integer ~ $0\leq m<\frac{s}{3}-\frac{1}{6}$,}
 \]
 where $\phi_0(x):=u_0(x)$ and for $0<m<\frac{s}{3}-\frac{1}{6},$
 	\begin{equation}\label{Compatibility}
\phi_m(x) :=-\phi_{m-1}'''(x)-\sum_{l=0}^{m-1} \binom{m-1}{l}\phi_l(x)\phi_{m-l-1}'(x).
 	\end{equation}
 \een
 Then there exists a solution $u(x,t)$ of the IBVP \eqref{IBVP} in the space $Z_{T}^s(\R^+)$. Moreover, the mapping $(u_0,f)\mapsto u$ is Lipshitz continuous on any ball in the norm of the mapping $H^s(\R^+)\times H^{\frac{s+1}{3}+\epsilon}(0,T)\rightarrow Z_{T}^s(\R^+)$.
 	\end{theorem}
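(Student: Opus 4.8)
The plan is to prove the theorem along the lines pioneered by Colliander--Kenig \cite{CK} and refined by Holmer \cite{Holmer} and Faminskii \cite{Fa2}, i.e.\ by rewriting the IBVP \eqref{IBVP} on $\R^+$ as a linear-plus-forced equation on the whole line and then closing a fixed point. First I would reduce to the case of vanishing interior evolution: extend $u_0\in H^s(\R^+)$ to some $\widetilde u_0\in H^s(\R)$ with $\|\widetilde u_0\|_{H^s(\R)}\lesssim \|u_0\|_{H^s(\R^+)}$, so that the candidate solution is sought in the Duhamel form
\[
u(t)= e^{-t\partial_x^3}\widetilde u_0 - \int_0^t e^{-(t-t')\partial_x^3}\partial_x\big(u^2\big)(t')\,dt' + \big(\mathcal{L}h\big)(t),
\]
where $\mathcal{L}$ is the Duhamel boundary forcing operator built from the Airy fundamental solution (suitably reflected/truncated so as to act on $\R^+$) and $h$ is an auxiliary function to be determined so that the trace at $x=0$ of the whole right-hand side equals the prescribed $f$. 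Solving that trace condition is a linear Volterra-type problem for $h$; the compatibility conditions \eqref{Compatibility} are exactly what guarantees that the resulting $h$ vanishes at $t=0$ to the right order and sits in the temporal trace class dictated by \eqref{setting_A}, with the fractional bookkeeping handled by Lemma \ref{alta}.

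Next I would establish the two families of linear estimates needed to run a contraction in a space built on the Bourgain spaces $X^{s,b}$ intersected with the trace-space structure of $Z_T^s(\R^+)$ from \eqref{regularity}: on the one hand the Kato smoothing and trace bounds for the free group $e^{-t\partial_x^3}$ (of the type quoted in the excerpt, giving spatial traces in $H^{(s-l+1)/3}_t$), and on the other hand the matching bounds for $\mathcal{L}h$ in \emph{every} component of the $Z_T^s(\R^+)$ norm together with its $X^{s,b}$ mapping properties. With these in hand, the nonlinear term is controlled by the sharp KdV bilinear estimate $\|\partial_x(uv)\|_{X^{s,b-1}}\lesssim\|u\|_{X^{s,b}}\|v\|_{X^{s,b}}$, valid for $s>-\tfrac34$ and in particular throughout the range $s\ge 0$ considered here. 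A small power of $T$ gives the contraction factor, yielding local existence, uniqueness in the ball, and — because all the estimates are bilinear and uniform on balls — the asserted Lipschitz dependence of the map $(u_0,f)\mapsto u$.

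To upgrade the local solution to a global one on any $[0,T]$, I would run an a priori argument exploiting the boundary flux identities. Differentiating the (formal) mass $\int_0^\infty u^2$, and then the energy $\int_0^\infty \big(\tfrac12 u_x^2-\tfrac13 u^3\big)$, produces only boundary contributions at $x=0$ built from the traces $u(0,t)=f(t)$, $u_x(0,t)$, $u_{xx}(0,t)$; when $f\equiv 0$ these identities are of favorable (dissipative) sign, and in general the remaining terms are controlled using the trace smoothing estimates proved above. This gives at worst polynomially growing bounds on $\|u(t)\|_{H^s(\R^+)}$ — directly for $s\ge 1$ via the energy, and for $0\le s<1$ by interpolation with the $L^2$-level bound (or an $I$-method variant) — which is enough to iterate the local theory up to any fixed $T$.

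I expect the genuinely delicate point to be the construction and estimation of the boundary forcing operator $\mathcal{L}$ across the full scale $Z_T^s(\R^+)$: one must match, for each admissible index $l$ and $m$, the temporal regularity $H^{(s-l+1)/3}$ of the spatial traces with the spatial regularity $H^{s-3m}$ of the time slices, while simultaneously keeping $\mathcal{L}h$ in a Bourgain space where the KdV bilinear estimate applies. The secondary technical nuisance is ensuring that no endpoint (logarithmic) loss occurs when gluing the compatibility conditions \eqref{Compatibility}, which is precisely why $\tfrac{s}{3}-\tfrac16$ is assumed non-integral in the statement.
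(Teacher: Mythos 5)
This statement is not proved in the paper at all: it is quoted verbatim from Faminskii \cite{Fa2} as an external well-posedness input (the authors only use its conclusions, in particular the trace regularity \eqref{regularity} and the Lipschitz dependence), so there is no internal proof to compare yours against. Judged on its own terms, your outline correctly identifies the general Colliander--Kenig/Holmer strategy (extension of the data, Duhamel boundary forcing operator, trace matching via a Volterra problem, contraction in a Bourgain-type space), but it is a roadmap rather than a proof, and two of the steps you treat as routine are precisely where the difficulty lies. First, the bilinear estimate $\|\partial_x(uv)\|_{X^{s,b-1}}\lesssim\|u\|_{X^{s,b}}\|v\|_{X^{s,b}}$ in the form you invoke requires $b>\tfrac12$, whereas the boundary forcing operator $\mathcal{L}h$ only maps into $X^{s,b}$ for $b<\tfrac12$; reconciling these (as Holmer does with a modified bilinear estimate at $b<\tfrac12$, or as Faminskii does by avoiding Bourgain spaces altogether in favor of boundary potentials and weighted smoothing norms) is the core of the local theory and cannot be waved through. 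You also need $\mathcal{L}h$ to land in every component of $Z_T^s(\R^+)$, including the spatial traces $\partial_x^l u$ up to order $l=s+1$, which for large $s$ is a substantial estimate you only gesture at.

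Second, and more seriously, your globalization step fails as written for $0\le s<1$. A priori bounds for a nonlinear flow do not interpolate: knowing $\|u(t)\|_{L^2}$ is bounded and that an $H^1$-level energy identity would close for $H^1$ data does not give a bound on $\|u(t)\|_{H^s}$ for intermediate $s$, and the parenthetical appeal to ``an $I$-method variant'' is naming a different (and nontrivial) argument, not supplying it. Moreover the mass/energy identities on the half-line (cf.\ \eqref{dMass} and \eqref{dEnergy} in the paper) contain boundary terms involving $\partial_x^2 u(0,t)$ and $\partial_t u(0,t)$, which for general boundary data $f$ are not controlled by the $H^{(s+1)/3}$ norm of $f$ alone --- this is exactly the obstruction the authors emphasize when justifying their choice $f\equiv 0$. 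Faminskii's global result for all $s\ge 0$ rests on a priori estimates in weighted spaces tailored to the half-strip, not on the interpolation scheme you describe. So the proposal is a reasonable sketch of the local theory's architecture but does not constitute a proof of the quoted theorem, and its global-in-time component contains a genuine gap.
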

 
\begin{rem}
The previous result is not only important in view of the GWP result, but also because of the compatibility conditions \eqref{Compatibility}, that lead to  \eqref{regularity}.
\end{rem}

\begin{rem}
The question of uniqueness in IBVPs posed on the half-line is relatively more complicated. In that sense Faminskii obtained (conditional) uniqueness in the sense introduced by Kato \cite{Kato}, i.e. in a subspace of $Z_T^s(\R^+)$. The unconditional uniqueness on the all space $C([0,T]:H^s(\R^+))$ was obtained by Bona, Sun and Zhang \cite{BSZ2}, by introducing the concept of mild solutions. The reader is referred to \cite{Bonaunconditional,Kato} for further discussions on unconditional and conditional well-posedness for general nonlinear evolution equations.
\end{rem}

 \subsection{Existence and continuity for the left half-line}

The following result of existence and continuity obtained by Faminskii \cite{Fa4} on the negative half-line its suffices for our purposes:
 \begin{theorem}\label{well-left}
 Consider the IBVP on the left half-line \eqref{IBVP_left}. Let $T>0$ be a fixed time, $u_0\in H^s(\R^-)$, $f\in H^{\frac{s+1}{3}+\epsilon}(\R^-)$ and $f_1\in H^{\frac{s}{3}+\epsilon}(\R^-)$. 
\ben
\item The regularity parameter $s\geq0$ is such that $\frac{s}{3}-\frac{1}{6}$ and $\frac{s}{3}-\frac{1}{2} $ are not integer, 
\item The parameter $\epsilon>0$ is arbitrary small in the case $s=0$, and $\epsilon=0$ in the case $s>0$. 
\item The boundary data $f_0$ and $f_1$ satisfy the compatibility conditions  
\[
f^{(m)}(t=0)=\phi_m(x=0),
\]
for any integer $0\leq m<\frac{s}{3}-\frac{1}{6}$, and 
\[
f_1^{(m)}(0)=\phi_m'(0),
\]
for any integer $0\leq m<\frac{s}{3}-\frac{1}{2}$; and where $\phi_0(x):=u_0(x)$, and 
\begin{equation}\label{Compatibility_2}
\phi_m(x):=-\phi_{m-1}'''(x)-\sum_{l=0}^{m-1} \binom{m-1}{l}\phi_l(x)\phi_{m-l-1}'(x).
\end{equation}
\een
 
 	Then there exists a solution $u=u(x,t)$ of the IBVP \eqref{IBVP_left} in the space $Z_T^{s}(\R^-)$. Moreover, the mapping $(u_0,f,f_1)\mapsto u$ is Lipshitz continuous on any ball in the norm of the mapping $H^s(\R^+)\times H^{\frac{s+1}{3}+\epsilon}(0,T)\times H^{\frac{s}{3}+\epsilon}(0,T)\rightarrow Z_{T}^s(\R^-)$.
 \end{theorem}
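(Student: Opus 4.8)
The plan is to prove Theorem \ref{well-left} by the dispersive method for KdV initial boundary value problems: one realizes the solution as a superposition of a linear Airy evolution on all of $\R$ and two boundary forcing operators tailored to the left half-line, in the spirit of Colliander--Kenig \cite{CK} and Holmer \cite{Holmer} (which underlie Theorem \ref{well}) and following Faminskii \cite{Fa4}. The first step is the linear analysis: record the sharp smoothing, Strichartz and maximal-function bounds for the Airy group $e^{-t\partial_x^3}$ on $\R$, together with its $X^{s,b}$ variants, and then construct the boundary forcing operators. The structural feature specific to $\R^-$ is that, writing the linear equation $\partial_t w+\partial_x^3w=0$ with a Laplace transform in $t$, for $\mathrm{Re}\,\lambda>0$ the characteristic equation $\xi^3=-\lambda$ has exactly two roots in the half-plane producing decay as $x\to-\infty$; this is the analytic reason two boundary data $f,f_1$ are needed on the left half-line while only $f$ is needed on the right. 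Accordingly one builds operators $\mathcal L_0,\mathcal L_1$ whose outputs solve the Airy equation on $\R$, decay at $-\infty$, and prescribe $w(0,\cdot)$ and $\partial_x w(0,\cdot)$ respectively; the hypotheses $f\in H^{(s+1)/3+\ve}(\R^-)$ and $f_1\in H^{s/3+\ve}(\R^-)$ are precisely what make $\mathcal L_0,\mathcal L_1$ bounded into $Z_T^s(\R^-)$, via the Kato-type trace estimates for $e^{-t\partial_x^3}$.

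The second step is the nonlinear fixed point. Extend $u_0$ to $\phi\in H^s(\R)$ with comparable norm, and pose the integral equation
\[
u(t)=e^{-t\partial_x^3}\phi-\int_0^t e^{-(t-t')\partial_x^3}\partial_x(u^2)(t')\,dt'+\mathcal L_0\big(f-p_0[u]\big)+\mathcal L_1\big(f_1-p_1[u]\big),
\]
restricted afterward to $\R^-$, where $p_0[u],p_1[u]$ are the $x=0$ traces of the first two terms; subtracting them enforces the two boundary conditions once one inverts the resulting $2\times2$ system of boundary symbols, with inverse bounded uniformly in frequency and uniformly in $s$ down to $-3/4$. A contraction is then run in a space combining a restricted $X^{s,b}$ norm with the trace norms defining $Z_T^s(\R^-)$; the nonlinear input is the Kenig--Ponce--Vega bilinear estimate $\|\partial_x(uv)\|_{X^{s,b-1}}\lesssim\|u\|_{X^{s,b}}\|v\|_{X^{s,b}}$ for $s>-\tfrac34$ \cite{KPV}, supplemented by bilinear trace estimates. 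This yields a solution on a short interval $[0,T_0]$ with $T_0=T_0(\|u_0\|_{H^s},\|f\|,\|f_1\|)$, and Lipschitz dependence on $(u_0,f,f_1)$ follows because the same estimates apply verbatim to differences of two solutions.

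The third step upgrades to the finite-$T$ statement and to membership in $Z_T^s(\R^-)$. The regularity structure \eqref{regularity} --- both $\partial_t^m u\in C([0,T];H^{s-3m}(\R^-))$ and the traces $\partial_x^l u\in C(\R^-;H^{(s-l+1)/3}(0,T))$ --- is obtained by a persistence/bootstrap argument, and here the compatibility conditions \eqref{Compatibility_2} are essential: they force the formal $t$-Taylor coefficients of $u$ at $t=0$, computed from the equation, to agree with those of $f$ and $f_1$, so that no singularity is created at the corner $(x,t)=(0,0)$. The assumptions that $s/3-\tfrac16$ and $s/3-\tfrac12$ be non-integers simply rule out the borderline Sobolev cases in this matching, where endpoint embeddings and logarithmic losses appear. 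For the global-in-$[0,T]$ claim when $s\geq0$, one iterates the local theory using an a priori $L^2(\R^-)$ bound: multiplying the equation by $u$ and integrating gives
\[
\frac{d}{dt}\int_{-\infty}^0 u^2\,dx=\big[\text{boundary terms at }x=0\big],
\]
where the right side is a combination of $f(t)$, $f_1(t)$ and the trace $\partial_x^2u(0,t)$, all controlled at regularity $s\geq0$ by the local theory and the data; a Gronwall argument then bounds $\|u(t)\|_{L^2(\R^-)}$ on $[0,T]$, and reiterating the local construction with a uniform step gives the solution on all of $[0,T]$.

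The main obstacle is the boundary analysis on the left half-line. Building two linear boundary forcing operators with the correct smoothing and trace-mapping properties, and --- the delicate part --- showing that the $2\times2$ matrix of boundary symbols arising from imposing both $f$ and $f_1$ is invertible with a bound uniform over all frequencies and uniform in $s$ down to the critical exponent $-\tfrac34$, is genuinely harder than the single-condition right half-line case behind Theorem \ref{well}. A secondary difficulty is that the very low-regularity bilinear estimate and the trace estimates are natural in different function spaces, so the solution space must be chosen to accommodate both; and one must track the compatibility conditions \eqref{Compatibility_2} carefully near the non-integer thresholds for $s$ when establishing \eqref{regularity}.
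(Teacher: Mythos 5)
First, a point of orientation: the paper does not prove Theorem \ref{well-left} at all --- it is quoted as a black box from Faminskii \cite{Fa4}, exactly as Theorem \ref{well} is quoted from \cite{Fa2}, and is used only to supply the Cauchy theory behind the stability argument. So there is no internal proof to compare against, and your write-up is a reconstruction. It follows the Colliander--Kenig/Holmer dispersive route \cite{CK,Holmer} (Riemann--Liouville boundary forcing operators, $X^{s,b}$ bilinear estimates), which is genuinely different from Faminskii's: he works with boundary potentials for the linearized problem and weighted energy/regularization arguments directly in the classes $Z_T^s$, with no Bourgain norms, and it is that machinery which produces the precise trace regularities in \eqref{regularity} and the non-integer thresholds $\tfrac{s}{3}-\tfrac16$, $\tfrac{s}{3}-\tfrac12$. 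Your route would buy lower regularity locally (down to $s>-\tfrac34$) at the cost of having to re-derive all the trace statements in \eqref{regularity}. Your explanation of why two boundary data are needed on $\R^-$ --- two roots of the characteristic equation produce modes decaying as $x\to-\infty$ --- is correct and is the standard one.

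As a proof, however, the proposal has two concrete gaps. First, every hard estimate is asserted rather than established: the boundedness of $\mathcal L_0,\mathcal L_1$ into $Z_T^s(\R^-)$, the bilinear trace estimates, and especially the uniform invertibility of the $2\times2$ boundary-symbol system are precisely the content of \cite{Holmer} and \cite{Fa4}, so what you have is a program, not an argument. Second, the global-in-$[0,T]$ step is circular as written. On the left half-line the mass identity reads $\frac{d}{dt}\int_{-\infty}^0 u^2\,dx=(\partial_xu)^2(0,t)-2u(0,t)\bigl(\partial_x^2u+\tfrac23u^2\bigr)(0,t)$ (cf.\ the computation in Lemma \ref{dissipativel}); the contributions of $f_1^2$ and $f^3$ are pure data, but the term $f(t)\,\partial_x^2u(0,t)$ involves the second-derivative trace, which at $s=0$ lies only in $H^{-1/3}(0,T)$ and is controlled only through the local solution norm you are trying to bound over $[0,T]$. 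A plain Gronwall inequality does not close; Faminskii handles this with weighted norms and a finer splitting of the boundary terms. Either restrict to $f\equiv0$ (which removes the bad term and is the only case the paper actually uses) or supply that analysis.
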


\bigskip

\section{Mass and Energy estimates}\label{Section_3}

\medskip

In this section we obtain several dispersive properties for the solutions of IBVPs \eqref{IBVP} and \eqref{IBVP_left}. These properties will involve suitable definitions of mass and energy. Unlike standard KdV on the line, in general mass and energy will not be conserved anymore, but under some additional assumptions, we will be able to prove that, even if they are not precisely conserved, at least they obey suitable estimates.

\subsection{The right half-line case} First we deal with the case of equation \eqref{IBVP}. This is certainly the less complicated case.
\begin{lemma}\label{dissipative}
Consider the following mass and energy functionals
\begin{equation}\label{Mass}
	M[u](t):=\frac12\int_0^{+\infty} u^2(x,t)dx,
\end{equation}
\begin{equation}\label{Energy}
E[u](t):=\int_0^{+\infty} \Big( \frac12 (\partial_x u)^2(x,t) -\frac13 u^3(x,t)\Big)dx,
\end{equation}
well-defined according to the initial conditions given. Then the solution $u=u(x,t)$ of the IBVP \eqref{IBVP} with 
\be\label{Homo_Cond}
u(0,t)=0 \quad \hbox{for all time $t\geq 0$,}
\ee
and initial datum $u_0 \in H^1(\R^+)$ satisfies, for all $t\geq 0$,
\begin{equation}\label{mass00}
M[u](t) \leq M[u_0],
\end{equation}
and
\begin{equation}\label{energy000}
E[u](t) \leq E[u_0].
\end{equation}
\end{lemma}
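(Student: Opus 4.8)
The plan is to deduce \eqref{mass00}--\eqref{energy000} from two exact dissipation identities, which I would first prove for smooth solutions and then transfer to the $H^1$ setting by approximation. Formally, writing the equation as $\partial_t u=-\partial_x(\partial_x^2 u+u^2)$, integrating by parts over $(0,+\infty)$, and discarding the boundary terms at $x=+\infty$ (where $u$ and its derivatives decay), one computes
\[
\frac{d}{dt}M[u](t)=\int_0^{+\infty}u\,\partial_t u\,dx=u(0,t)\,\partial_x^2u(0,t)+\tfrac23u(0,t)^3-\tfrac12\big(\partial_x u(0,t)\big)^2 ,
\]
so that, under the homogeneous condition \eqref{Homo_Cond}, $\frac{d}{dt}M[u](t)=-\tfrac12(\partial_x u(0,t))^2\le 0$. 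Similarly, since \eqref{Homo_Cond} forces $\partial_t u(0,t)=0$, an integration by parts in the energy gives $\frac{d}{dt}E[u](t)=-\int_0^{+\infty}(\partial_x^2 u+u^2)\,\partial_t u\,dx$, and inserting $\partial_t u=-\partial_x(\partial_x^2 u+u^2)$ and integrating by parts once more yields
\[
\frac{d}{dt}E[u](t)=\tfrac12\Big[\big(\partial_x^2u+u^2\big)^2\Big]_{x=0}^{x=+\infty}=-\tfrac12\big(\partial_x^2u(0,t)+u(0,t)^2\big)^2=-\tfrac12\big(\partial_x^2u(0,t)\big)^2\le 0 .
\]
These are the ``hidden dissipative'' identities mentioned in the introduction; once they are rigorous, integrating in time over $[0,t]$ gives the two claimed bounds.

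To justify the integrations by parts I would regularize the data. Since $u_0(0)=0$, part (a) of Lemma \ref{alta} gives $u_0\in H_0^1(\R^+)$, hence there exist $u_{0,n}\in C_{0,c}^\infty(\R^+)$ with $u_{0,n}\to u_0$ in $H^1(\R^+)$. Each $u_{0,n}$ belongs to $H^s(\R^+)$ for every $s\ge 0$, and since all its derivatives vanish at $x=0$, every function $\phi_m$ built from it via \eqref{Compatibility} also vanishes at $x=0$; thus $u_{0,n}$ satisfies all the compatibility conditions of Theorem \ref{well} with $f\equiv 0$. Fixing a large admissible $s$, Theorem \ref{well} produces solutions $u_n\in Z_T^s(\R^+)$ that are smooth in $(x,t)$ and decay, together with their derivatives, as $x\to+\infty$; for these the computations of the previous paragraph are legitimate, and integrating in time we obtain $M[u_n](t)\le M[u_{0,n}]$ and $E[u_n](t)\le E[u_{0,n}]$ for all $t\in[0,T]$.

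It then remains to pass to the limit. By the Lipschitz continuity of the solution map in Theorem \ref{well}, $u_n\to u$ in $Z_T^1(\R^+)$, so $u_n(t)\to u(t)$ in $H^1(\R^+)$ uniformly on $[0,T]$; together with $H^1(\R^+)\hookrightarrow L^\infty(\R^+)$ this gives $M[u_n](t)\to M[u](t)$ and $E[u_n](t)\to E[u](t)$, while $M[u_{0,n}]\to M[u_0]$ and $E[u_{0,n}]\to E[u_0]$. Letting $n\to\infty$ in the two inequalities yields \eqref{mass00}--\eqref{energy000} on $[0,T]$, and since $T>0$ is arbitrary, for all $t\ge 0$. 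I expect the main difficulty to be exactly this justification step rather than the algebra: the energy identity features the square of the second trace $\partial_x^2u(0,t)$, which sits at the very regularity threshold that the Cauchy theory supplies for $H^1$ data, so passing through smooth solutions, and using the density of $C_{0,c}^\infty(\R^+)$ in $H_0^1(\R^+)$ (which is what lets us regularize while preserving the boundary condition $u_0(0)=0$), is essential and not a mere formality.
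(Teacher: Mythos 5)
Your proposal is correct and follows essentially the same route as the paper: the same two dissipation identities for smooth solutions (the paper's \eqref{dMass} and \eqref{dEnergy}, with the boundary condition killing the $\partial_t u(0,t)\,\partial_x u(0,t)$ term), followed by regularization of the datum, application of Theorem \ref{well} at high regularity, and passage to the limit via the continuity of the data-to-solution map. The only cosmetic difference is that you approximate by $C_{0,c}^{\infty}(\R^+)$ data, for which all compatibility conditions hold automatically, whereas the paper uses data bounded in $H^{\frac72^{+}}(\R^+)$ with $u_{0n}(0)=\partial_x^3 u_{0n}(0)=0$.
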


\begin{proof}
First, we assume that $u$ is sufficiently smooth and decays fast enough. A simple calculation shows that a smooth solution $u(x,t)$ of IBVP \eqref{IBVP} satisfies the identity
\be\label{Mass000}
\frac d{dt}\int_0^{+\infty} u^2(t) dx =  -\frac12 (\partial_xu)^2(0,t) +u(0,t)\Big(\partial_x^2u + \frac 23 u^2\Big)(0,t).
\ee
Indeed, multiplying the equation in \eqref{IBVP} by $u$ and integrating on $(0,\infty)$ in $x$ we get
	\begin{equation}
	\frac12 \frac{d}{dt}\int^{+\infty}_{0}u^2(x,t)dx=-\int^{+\infty}_{0}\partial_x(\partial_x^2u+u^2)udx.
	\end{equation}
	Integrating by parts, 
	\begin{equation}
	\begin{split}
	\frac12\frac{d}{dt}\int^{+\infty}_{0}u^2(x,t)dx&=\int^{+\infty}_{0}(\partial_x^2u+u^2)\partial_xudx-(\partial_x^2u+u^2)u\bigg|^{\infty}_0\\
	&=\int^{+\infty}_{0}\frac{d}{dx}\left(\frac12(\partial_xu)^2+\frac{u^3}{3}\right)dx \\
	& \quad + (\partial_x^2u(0,t)+u^2(0,t))u(0,t)\\
	&=-\frac 12 \partial_xu(0,t)^2- \frac13 u(0,t)^3\\
	& \quad  +(\partial_x^2u(0,t)+u^2(0,t))u(0,t)\\
	&= -\frac12(\partial_xu)^2(0,t) + u(0,t)\Big(\partial_x^2 u(0,t)+ \frac 23 u^2(0,t)\Big).
	\end{split}
	\end{equation}
We recall this last estimate again because it will be important for later purposes:
\be\label{dMass}
\frac12 \frac{d}{dt}\int^{+\infty}_{0}u^2(x,t)dx = -\frac12(\partial_xu)^2(0,t) + u(0,t)\Big(\partial_x^2 u(0,t)+ \frac 23 u^2(0,t)\Big).
\ee	
Note that, unless $u(0,t) =0$, we will have a source term in the mass coming from a second derivative term at $x=0$. This term is certainly very harmful and difficult to control by using only data in $H^1$. This fact certainly supports our choice of zero boundary condition on the corner $x=0$.

\medskip

Now, after integration in time in \eqref{Mass000}, we obtain
\begin{equation}\label{mass}
\begin{aligned}
\int_0^{+\infty} u^2(t) dx= &~ \int_0^{+\infty} u_0^2 dx-\frac12 \int_0^{t}(\partial_xu)^2(0,s)ds \\
&~ +\int_0^{t}u(0,s)\Big(\partial_x^2u + \frac 23 u^2\Big)(0,s)ds.
\end{aligned}
\end{equation}
Now we deal with the energy estimate. Indeed, we multiply the equation in \eqref{IBVP} by $(\partial_x^2u+u^2)$ and integrate on $(0,\infty)$ in $x$:
\[
\int_0^{+\infty}  \partial_t u ( \partial_x^2u+u^2) dx + \frac12\int_0^{\infty} \frac d{dx} (\partial_x^2u+u^2)^2  =0.
\]
Integrating by parts,
\[
\int_0^{+\infty} (- \partial_{tx}^2 u  \partial_x u+u^2 \partial_t u ) dx  - \partial_tu(0,t)\partial_xu(0,t) - \frac12 (\partial_x^2u+u^2)^2(0,t)  =0.
\]
Therefore, we obtain a new identity for the energy
\be\label{dEnergy}
\begin{aligned}
\frac{d}{dt}\int^{\infty}_{0}\left(\frac12 (\partial_xu)^2- \frac 13u^3\right)dx  = &~{} - \partial_tu(0,t)\partial_xu(0,t) \\
& ~{} - \frac12(\partial_x^2u(0,t)+u^2(0,t))^2.
\end{aligned}
\ee
This identity essentially says that, unless $\partial_tu(0,t) =0$, then the energy $E[u]$ has no definite dynamics. Once again, controlling the term $\partial_tu(0,t)$ in \eqref{dEnergy} is hard because it is related  through the original equation with third order derivatives in space.

\medskip

Consequently, replacing the equation \eqref{IBVP} and integrating in time,
\begin{equation}\label{energy}
\begin{split}
& \int_0^{+\infty} \Big( \frac12 (\partial_xu)^2 -\frac13 u^3\Big)(t)dx =\\
& \quad=\int_0^{+\infty} \Big( \frac12 (\partial_xu_{0})^2-\frac13 u_0^3\Big)dx \\
 &\qquad -\frac12 \int_0^{t}(\partial_x^2u(0,s)+u^2(0,s))^2ds \\
 & \qquad + \int_0^{t}\partial_xu(0,s) \partial_x(\partial_x^2 u(x,s) +u^2(x,s))\Big|_{x=0}ds.
  \end{split}
\end{equation}

Now we justify the last mass and energy computations for the case of $H^1$ data. Assume that $u(x,t)$ is a solution for the IBVP \eqref{IBVP} with initial data $u_0\in H^s(\R^+)$, with $u_0(0)=\partial_x^3u_0(0)=0$, and boundary data $f\in H^{\frac{s+1}{3}}(\R^+)$, for a given $s$ satisfying $\frac72<s<\frac{11}{2}$ given by Theorem \ref{well} (the condition for the third derivative of $u_0$ comes from the case $m=1$ in Theorem \ref{well}). From \eqref{regularity} we have that 
\begin{equation*}
\begin{aligned}
u(0,t)\in & ~H^{\frac{s+1}{3}}(\R^+), \\
\partial_xu(0,t)\in & ~H^{\frac{s}{3}}(\R^+),\\
\partial_{x}^2u(0,t)\in & ~H^{\frac{s-1}{3}}(\R^+),\\
 \text{ and }~ \partial_x^3u(0,t)\in & ~H^{\frac{s-2}{3}}(\R^+). 
\end{aligned}
\end{equation*}
It follows that for fixed $t$ we have that $\partial_tu(x,t)\in H^{s-3}(\R^+)$ has a well-defined trace at $x=0$. Hence for homogeneous boundary condition $u(0,t)=f(t)\equiv 0$ we have that 
\begin{equation}\label{trace}
0=\partial_tu(x,t)\big|_{x=0}=- \partial_x(\partial_x^2u(x,t) +u^2(x,t))\big|_{x=0}.
\end{equation}

Consequently, the identities for the mass \eqref{mass} and the energy \eqref{energy} take the form ($t\geq 0$)
\begin{equation}\label{mass0}
\int_0^{+\infty} u^2(t) dx = \int_0^{+\infty} u_0^2 dx-\int_0^t\frac12 (\partial_xu)^2(0,s) ds,
\end{equation}
\begin{equation}\label{energy0}
\begin{aligned}
\int_0^{+\infty} \Big( \frac12 (\partial_xu)^2 -\frac13 u^3\Big)dx = &~ \int_0^{+\infty} \Big( \frac12 (\partial_xu_{0})^2 -\frac13 u_0^3\Big)dx \\
&~  -\frac12\int_0^t (\partial_{xx}u(0,s)+u^2(0,s))^2.
\end{aligned}
\end{equation}
From \eqref{mass0} and \eqref{energy0} we have the following dissipative mechanism for the mass and the energy
\[
\int_0^{+\infty} u^2(t) dx \leq \int_0^{+\infty} u_0^2 dx,
\]
and
\[
\int_0^{+\infty} \Big( \frac12 (\partial_xu)^2 -\frac13 u^3\Big)dx \leq \int_0^{+\infty} \Big( \frac12 (\partial_xu_{0})^2 -\frac13 u_0^3\Big)dx.  
\]
Now assume $u_0\in H^1(\R^+)$. Let $\{u_{0n}\}$ be a bounded sequence in $H^{\frac72 ^{+}}(\R^+)$ such that $u_{0n}(0)=\partial_x^3u_{0n}(0)=0$ and
\begin{equation}
\|u_{0n}-u_0\|_{H^1(\R^+)}\rightarrow 0,\; \text{when}\; n\rightarrow +\infty.
\end{equation}
It follows from the previous analysis that the identities \eqref{mass00} and \eqref{energy000} are valid for all $u_n$. Now letting $n\rightarrow +\infty$ and using the continuity of flow data to solution given in Theorem \ref{well}  the result follows.
\end{proof}

\begin{remark}
Note that \eqref{mass0} and \eqref{energy0} can be recast as hidden trace smoothing effects for bounded in time solutions in the energy space. Indeed, under the boundary value condition $u(0,t)=0$ for all $t$, we have
\[
\int_0^t (\partial_xu)^2(0,s) ds \lesssim \sup_t \|u(t)\|_{L^2(0,\infty)}^2 ,
\]
and
\[
\int_0^t (\partial_{x}^2u)^2 (0,s)dx \lesssim  \sup_t \|u(t)\|_{H^1(0,\infty)}^2.
\]
\end{remark}
Now we deal with the left half-line case.


\subsection{The left half-line case}

Using similar arguments as in the proof of Lemma \ref{dissipative} we can obtain the following result for $u(t)$ placed in $\R^-$. Note that there are some important differences when considering general data.

\begin{lemma}\label{dissipativel}
	Consider now the following mass and energy functionals
	\begin{equation}\label{Mass_left}
	M[u](t):=\frac12\int_{-\infty}^0 u^2(t)dx,
	\end{equation}
	and
	\begin{equation}\label{Energy_left}
	E[u](t):=\int_{-\infty}^{0} \Big( \frac12 (\partial_xu)^2(t) -\frac13 u^3(t)\Big)dx,
	\end{equation}
	well-defined for $H^1(\R^-)$ solutions of the IBVP \eqref{IBVP_left} such that $u(t=0) =u_0 \in H^1(\R^-)$. Assume additionally the homogeneous boundary condition 
	\be\label{Assumptions0}
	u(0,t)=f(t)\equiv 0.
	\ee
	Then we have the following identities
	\begin{equation}\label{mass00left}
	M[u](t)\leq M[u_0], 
	\end{equation}
	and
	\begin{equation}\label{energy000right}
	E[u](t) \leq  E[u_0], \quad t\leq 0.  
	\end{equation}
\end{lemma}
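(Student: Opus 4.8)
The plan is to mimic the structure of the proof of Lemma \ref{dissipative}, working first with smooth, fast-decaying solutions and then passing to $H^1(\R^-)$ data by approximation, but being careful with the boundary terms that now appear at $x=0$ \emph{from below}. First I would multiply the equation in \eqref{IBVP_left} by $u$ and integrate over $(-\infty,0)$ in $x$. Integrating by parts and using that $u$ and its derivatives vanish as $x\to-\infty$, the only boundary contribution comes from $x=0$, and one gets the analogue of \eqref{dMass}:
\[
\frac12 \frac{d}{dt}\int_{-\infty}^{0}u^2(x,t)dx = \frac12(\partial_xu)^2(0,t) - u(0,t)\Big(\partial_x^2 u(0,t)+ \frac 23 u^2(0,t)\Big).
\]
Note the crucial sign change relative to \eqref{dMass}: on the left half-line the surface term $\frac12(\partial_x u)^2(0,t)$ enters with a \emph{positive} sign. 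This is exactly why the positive-time problem has no dissipative structure, and why we must restrict to $t\leq 0$. Under the homogeneous boundary condition \eqref{Assumptions0}, the last two terms drop, and integrating from $t$ (with $t\le 0$) to $0$ yields $M[u_0]-M[u](t)=\frac12\int_t^0 (\partial_xu)^2(0,s)\,ds \ge 0$, i.e. $M[u](t)\le M[u_0]$.

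For the energy, I would multiply the equation by $(\partial_x^2 u+u^2)$ and integrate over $(-\infty,0)$, integrate by parts as in the derivation of \eqref{dEnergy}, and collect the $x=0$ boundary terms. This produces
\[
\frac{d}{dt}\int_{-\infty}^{0}\Big(\frac12 (\partial_xu)^2- \frac 13u^3\Big)dx = \partial_tu(0,t)\partial_xu(0,t) + \frac12(\partial_x^2u(0,t)+u^2(0,t))^2,
\]
again with the opposite sign of \eqref{dEnergy}. Now I invoke the homogeneous boundary condition: since $u(0,t)\equiv 0$, differentiating in $t$ gives $\partial_t u(0,t)=0$, hence the first term vanishes, and from the equation $\partial_t u = -\partial_x(\partial_x^2 u + u^2)$ we do \emph{not} directly need $f_1$ here — the term $\partial_t u(0,t)\partial_x u(0,t)$ is killed purely by $u(0,t)=0$. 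So integrating from $t\le 0$ to $0$,
\[
E[u_0]-E[u](t) = \frac12\int_t^0 (\partial_x^2u(0,s)+u^2(0,s))^2\,ds \ge 0,
\]
which gives $E[u](t)\le E[u_0]$ for $t\le 0$, as claimed.

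Finally I would justify these manipulations for genuine $H^1(\R^-)$ data exactly as in Lemma \ref{dissipative}: take $s$ with $\tfrac72<s<\tfrac{11}{2}$, pick smooth data $u_{0n}$ with the relevant compatibility conditions at $x=0$ (here $u_{0n}(0)=\partial_x^3 u_{0n}(0)=0$, plus the extra trace condition on $\partial_x u_{0n}(0)$ dictated by $f_1$ in Theorem \ref{well-left}) converging to $u_0$ in $H^1(\R^-)$, use Theorem \ref{well-left} to get solutions $u_n\in Z_T^s(\R^-)$ with well-defined traces at $x=0$ of all the derivatives appearing above, establish \eqref{mass00left}–\eqref{energy000right} for each $u_n$ via the identities just derived, and pass to the limit using the Lipschitz continuity of the data-to-solution map in Theorem \ref{well-left}. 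The main obstacle I anticipate is bookkeeping the boundary terms at $x=0$ correctly — in particular making sure that under \eqref{Assumptions0} alone (with $f_1$ merely a compatible trace, not necessarily zero) the problematic terms involving $\partial_x^2 u(0,t)$ and $\partial_t u(0,t)$ genuinely cancel or carry the favorable sign for $t\le 0$; the time-orientation is what makes the surface integrals nonnegative, and getting that sign right is the whole point of restricting to negative times.
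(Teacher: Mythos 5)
Your proposal is correct and follows essentially the same route as the paper: the same multipliers $u$ and $\partial_x^2u+u^2$ on $(-\infty,0)$, the same boundary identities with the reversed sign of $\frac12(\partial_xu)^2(0,t)$ and $\frac12(\partial_x^2u+u^2)^2(0,t)$, the same use of $u(0,t)\equiv 0$ (hence $\partial_tu(0,t)=0$) to kill the remaining surface terms, and the same density/approximation step via the left half-line well-posedness theorem. The only cosmetic difference is that you integrate from $t$ to $0$ rather than from $0$ to $t$, which makes the sign of the dissipative term for $t\leq 0$ slightly more transparent.
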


\begin{rem}
Completely similar results can be obtained for solutions defined starting at time $t=t_0 \neq 0$. We only state the case $t=0$ for notational simplicity. 
\end{rem}

\begin{rem}
Conditions \eqref{Assumptions0}, although more restrictive than the right half-line case, are somehow natural (and necessary) for the dynamics on $\R^-$, even at the level of the Cauchy theory, see e.g. \eqref{IBVP_left}.  
\end{rem}

\begin{proof}
	We only give a formal proof by assuming that $u(x,t)$ is a smooth function and satisfies the decay properties when $x \rightarrow -\infty$. The general case where $u\in C((0,T);H^1(\R{^-}))$ can be obtained by an approximation argument as in the proof of Lemma \ref{dissipative}.
	
	\medskip
	
	Multiplying the equation in \eqref{IBVP_left} by $u$ we get
	\begin{equation}
	\frac12 \frac{d}{dt}\int_{-\infty}^{0}u^2(x,t)dx=-\int_{-\infty}^{0}\partial_x(\partial_x^2u+u^2)udx.
	\end{equation}
	Integrating by parts, 
	\begin{equation}
	\begin{split}
	\frac12\frac{d}{dt}\int_{-\infty}^{0}u^2(x,t)dx&=\int_{-\infty}^{0}(\partial_x^2u+u^2)\partial_xudx-(\partial_x^2u+u^2)u\bigg|_{-\infty}^0\\
	&=\int_{-\infty}^{0}\frac{d}{dx}\left(\frac12(\partial_xu)^2+\frac{u^3}{3}\right)dx \\
	& \quad -(\partial_x^2u(0,t)+u^2(0,t))u(0,t)\\
	&=\frac 12 \partial_xu(0,t)^2+ \frac13 u(0,t)^3 -(\partial_x^2u(0,t)+u^2(0,t))u(0,t)\\
	&= \frac12(\partial_xu)^2(0,t)-u(0,t)\Big(\partial_x^2 u(0,t)+ \frac 23 u^2(0,t)\Big).
	\end{split}
	\end{equation}
	Then, after integrating between $0$ and $t$, we obtain
	\begin{equation}\label{massleftt}
	\begin{aligned}
	M[u](t)= &~ M[u_0]+\int_0^{t}\frac12 (\partial_x u)^2(0,s)ds \\
	&~ {} -\int_0^{t}u(0,s)\Big(\partial_x^2u(0,s)+ \frac 23 u^2(0,s)\Big)ds.
	\end{aligned}
	\end{equation}
	Finally, note that under assumptions \eqref{Assumptions0} we obtain \eqref{mass00left}.

\medskip	
	
	To obtain \eqref{energyleftt} we multiply the equation in \eqref{IBVP_left} by $(\partial_x^2u+u^2)$ and integrate by parts,
	\begin{equation}
	\begin{split}
 &\frac{d}{dt}\int_{-\infty}^{0}\left(-\frac12 (\partial_xu)^2+\frac 13u^3\right)dx+\partial_tu(0,t)\partial_xu(0,t)=\\
 & \quad=\int_{-\infty}^{0}\partial_tu(\partial_x^2u+u^2)dx\\
&\quad =-\int_{-\infty}^{0}\partial_x(\partial_x^2u+u^2)(\partial_x^2u+u^2)dx\\
&\quad=-\int_{-\infty}^{0}\frac{d}{dx}\frac{1}{2}(\partial_x^2u+u^2)^2dx\\
&\quad=-\frac12(\partial_x^2u(0,t)+u^2(0,t))^2.
 \end{split}
 \end{equation}
 Therefore, integrating between 0 and $t$,
	\[
	\begin{split}
	E[u](t) =&~ E[u](0)+\frac12 \int_0^{t}(\partial_x^2u(0,s)+u^2(0,s))^2ds \\
	&+\int_0^{t}\partial_xu(0,s)(\partial_{x}^2u(x,s) +u^2(x,s))_x\big|_{x=0}ds.
	\end{split}
	\]
Finally, using \eqref{Assumptions0} we obtain
	\begin{equation}\label{energyleftt}
	\begin{split}
	E[u](t) =& ~ E[u](0)+\frac12 \int_0^{t}(\partial_x^2u(0,s))^2ds,
	\end{split}
	\end{equation}
which proves \eqref{energy000right}.
\end{proof}

\begin{rem}
Note that \eqref{energy000right} gives good uniform estimates in time in the case where $t<0$ only. For $t>0$ one has a boundary source term coming from the \emph{second derivative} of $u$ in $L^2$, which is a very bad term in the sense that one requires a third (or even fourth) order energy estimate to control it. 
\end{rem}

\bigskip

\section{Start of proof of Theorem \ref{teorema}: Extension to the entire line}\label{Section_4}

\medskip

The proof is based on the classical argument of Weinstein \cite{Weinstein}, with some minor changes coming from the fact that we do not work on the whole line, but only on $\mathbb R^+$, and the KdV soliton is not an exact solution of the problem by itself. See also \cite{Mu,Munoz} for a fully explained, similar argument. 

\medskip

The idea behind Weintein's result is to show a coercivity estimate, which is obtained using spectral properties of a well-chosen linear unbounded operator. In the next sections, we will find a suitable operator for the half-line case, to then extend it to the entire space to make use of the standard Weinstein's theory of stability.

\medskip

Take $c>0$ fixed and  $L>L_0>0$, where $L_0$ will be taken as large as needed. Assume that \eqref{Sta_right_1} is satisfied for an initial datum $u_0$, and for a certain $\al<\al_0$ to be chosen later. 

\medskip

Let $u(t)$ be the corresponding solution of  the IBVP \eqref{IBVP}, with boundary data $u(0,t)\equiv 0$, and given by Theorem \ref{well}. For $C_0>1$, consider the tubular neighborhood
\be\label{tubular}
\mathcal M[C_0]:= \Big\{ v\in H^1(\R^+) ~ : ~  \inf_{\rho_0\in \R} \|v-Q_c(\cdot-\rho_0 -L)\|_{H^1(\R^+)}<C_0(\alpha+e^{-\frac{\sqrt{c}}{2}L})\Big\}.
\ee
Note that from \eqref{Sta_right_1} we have $u_0 \in \mathcal M[C_0]$. We want to prove that for $L$ and $C_0$ large enough, and $\al<\al_0$ small, $u(t) \in \mathcal M[C_0]$ for al $t\geq 0$.
 
\medskip

Similarly, by the continuity of the KdV flow, we have $u(t) \in \mathcal M[C_0]$ for sufficiently small time $t$. Using a bootstrap argument, we will show the implication
\be\label{bootstrap}
t\geq 0, \quad u(t) \in \mathcal M[C_0] \implies u(t) \in \mathcal M[C_0/2], 
\ee
 which will prove \eqref{Sta_right_2}.

\subsection{Modulation} By taking $\al,L$ smaller, we can ensure the following decomposition argument:


\begin{lem}[Modulation]\label{Modulation}
Assume that  $u(t) \in \mathcal M[C_0]$ for all $t\geq 0$. Then, by taking $\al_0$ smaller and $L_0$ larger if necessary, there exists $\rho=\rho(t) \in \R$ such that we have the following decomposition:
\begin{equation}\label{Modulation_1}
u(x,t)=Q_c(x-\rho(t)-L)+z(x,t),
\end{equation}
where $z(x,t)$ satisfies, for all $t\geq 0$,
\be\label{Ortho}
\int_0^{+\infty} z(x,t) Q_c' (x-\rho(t)-L)dx =0,
\ee
and $\rho(t)$ satisfies the estimate
\be\label{rho_estimate}
|\rho'(t)-c| \lesssim \|z(t)\|_{H^1(\R^+)}  + e^{-\sqrt{c}L}.
\ee
Finally,
\be\label{z0}
\|z(0)\|_{H^1(\R^+)} \lesssim \alpha +e^{-\sqrt{c}L},
\ee
with an implicit constant independent of $C_0$.
\end{lem}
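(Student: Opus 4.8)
The plan is to establish the modulation decomposition by a standard application of the implicit function theorem, followed by differentiating the orthogonality condition along the flow to obtain the modulation equation for $\rho(t)$.

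\textbf{Step 1: Construction of $\rho(t)$ via the implicit function theorem.} First I would consider the functional
\[
\mathcal{F}(v,\rho):=\int_0^{+\infty}\big(v(x)-Q_c(x-\rho-L)\big)\,Q_c'(x-\rho-L)\,dx,
\]
defined for $v$ in a small $H^1(\R^+)$-neighborhood of $Q_c(\cdot-L)$ and $\rho$ near $0$. At $v=Q_c(\cdot-L)$, $\rho=0$, we have $\mathcal{F}=0$ since $\int Q_c' Q_c'$ is computed against a shift $\rho=0$... more precisely $\mathcal F(Q_c(\cdot-L),0)=-\int_0^\infty Q_c Q_c' \,dx$, which is $O(e^{-\sqrt c L})$ by the exponential decay of $Q_c$ restricted to $\R^+$ (here $L$ large makes the boundary contribution negligible). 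The partial derivative $\partial_\rho\mathcal{F}$ at this point equals $\int_0^\infty (Q_c')^2\,dx + O(e^{-\sqrt c L}) > 0$ for $L$ large, which is nonvanishing. Hence for $L_0$ large and $\alpha_0$ small, the implicit function theorem produces a unique $C^1$ map $v\mapsto\rho(v)$ on $\mathcal{M}[C_0]$ (shrinking the neighborhood as needed, uniformly in $C_0$ after first fixing $C_0$ — note the neighborhood radius $C_0(\alpha+e^{-\sqrt c L/2})$ can be made small by taking $\alpha_0$ small and $L_0$ large for each fixed $C_0$) such that \eqref{Ortho} holds. Applying this pointwise in $t$ to $u(t)\in\mathcal{M}[C_0]$ and using the continuity of $t\mapsto u(t)$ in $H^1(\R^+)$ from Theorem \ref{well} gives the $C^1$ function $\rho(t)$ and the decomposition \eqref{Modulation_1} with $z(x,t):=u(x,t)-Q_c(x-\rho(t)-L)$.

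\textbf{Step 2: Size of $z(0)$.} Estimate \eqref{z0} follows from the near-optimality of the modulation parameter: at $t=0$, $\|u_0-Q_c(\cdot-L)\|_{H^1(\R^+)}<\alpha$, and since $\rho(0)$ is the (approximate) minimizer of $\rho\mapsto\|u_0-Q_c(\cdot-\rho-L)\|_{H^1(\R^+)}$, one gets $\|z(0)\|_{H^1(\R^+)}\le \|u_0-Q_c(\cdot-L)\|_{H^1(\R^+)}+|\rho(0)|\cdot\|\partial_\rho Q_c(\cdot-L)\|_{H^1}$, and $|\rho(0)|\lesssim\alpha+e^{-\sqrt c L}$ from the IFT bound (the $e^{-\sqrt c L}$ coming from the boundary term $\mathcal F(Q_c(\cdot-L),0)$ above). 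The implicit constant depends only on $Q_c$, hence is independent of $C_0$.

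\textbf{Step 3: The modulation equation \eqref{rho_estimate}.} Differentiating \eqref{Ortho} in $t$: writing $R:=Q_c(\cdot-\rho(t)-L)$, we have $\partial_t z = \partial_t u - \partial_t R = \partial_t u + \rho'(t) R'$. Substituting $\partial_t u = -\partial_x(\partial_x^2 u+u^2)$ and using the equation $R''=cR-R^2$ satisfied by $R$ (from \eqref{soliton2}), one expands $\partial_t u$ around $R$ to isolate the linear part. Differentiating $0=\frac{d}{dt}\int_0^\infty z R'$ gives
\[
0=\int_0^\infty \partial_t z\, R' - \rho'(t)\int_0^\infty z\, R'',
\]
and inserting the expression for $\partial_t z$ yields, after integration by parts on $\R^+$ (which produces boundary terms at $x=0$, all of which are $O(e^{-\sqrt c L})$ since $R$ and its derivatives are exponentially small near $x=0$ when $\rho(t)+L$ is large — this is where $\rho'\approx c>0$ and $L$ large keep the soliton away from the corner, to be closed in the bootstrap), a scalar equation of the form
\[
(\rho'(t)-c)\Big(\textstyle\int_0^\infty (R')^2 + O(\|z\|_{H^1})+O(e^{-\sqrt c L})\Big) = O\big(\|z\|_{H^1(\R^+)}^2\big)+O(e^{-\sqrt c L}) + O\big(\|z\|_{H^1}\big)\cdot O(\cdots),
\]
and more carefully the linear-in-$z$ terms against $R'$ vanish by the orthogonality \eqref{Ortho} together with $\mathcal{L}R'=0$ (the linearized operator kills $R'$), so the right-hand side is $\lesssim \|z(t)\|_{H^1(\R^+)}^2 + e^{-\sqrt c L}$; dividing by the coefficient (bounded below by $\int(R')^2/2>0$ for $L$ large and $\|z\|$ small) gives \eqref{rho_estimate} with $\|z\|_{H^1}^2\le\|z\|_{H^1}$ in the relevant regime.

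\textbf{Main obstacle.} The delicate point is the careful bookkeeping of the boundary terms at $x=0$ generated by every integration by parts on the half-line: one must verify that each such term is controlled by $e^{-\sqrt c L}$ using the exponential localization of $Q_c$ and its derivatives away from the corner, and crucially this requires knowing a priori that $\rho(t)+L$ stays large — which is only guaranteed as long as $u(t)\in\mathcal M[C_0]$ and \eqref{rho_estimate} holds, so the argument is genuinely bootstrapped and the estimates must be arranged to close consistently. A secondary technical issue is tracking that all implicit constants in \eqref{rho_estimate} and \eqref{z0} are independent of $C_0$, so that $C_0$ can later be chosen large without circularity.
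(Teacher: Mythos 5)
Your proof follows essentially the same route as the paper's: the implicit function theorem applied to the very same functional $\mathcal F[v,\rho_0]$ on the half-line, with the same observation that $\partial_{\rho_0}\mathcal F\geq \int_0^{+\infty}Q_c'^2>0$, followed by the standard differentiation of the orthogonality relation to obtain \eqref{rho_estimate} and \eqref{z0}, which the paper itself delegates to \cite{MMT}. Two harmless slips do not affect the argument: $\mathcal F[Q_c(\cdot-L),0]$ is exactly $0$ (the two terms in the integrand cancel), and the linear-in-$z$ terms in the modulation equation do not all vanish (e.g.\ a term like $\int_0^{+\infty} z\,(R')^2\,dx$ survives the orthogonality), but they are $O(\|z\|_{H^1(\R^+)})$, which is precisely what the stated bound \eqref{rho_estimate} allows.
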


\begin{proof} The proof of this result is standard, but since we are working in the case of the half line, we need some small changes in the proof. See Appendix \ref{A} for a detailed proof.
\end{proof}

\begin{rem}
From \eqref{rho_estimate} we have the lower bound
\[
\rho(t) \geq \rho(0) + ct - tC_0(\alpha+e^{-\frac{\sqrt{c}}{2}L}),
\]
which for small $\alpha$ and large $L$ ensures that $\rho(t)$ is always an increasing function of time, $t\geq 0$. This fact will be used several times through the computations below. 
\end{rem}

\subsection{Extension to the whole line} The following step in the proof is a suitable extension of the spectral problem to the whole line. We will see that not every extension is useful, but a mild one will satisfy all the requirements.

\medskip

\begin{defn}[Zero extension, righ half-line case]\label{Extension_a}
Let $v\in H^1(\R^+)$ such that $v(x=0)=0$. We define its (zero) extension $\hat v$ as the function
\be\label{Extension_0}
\hat v(x):= \begin{cases} v(x) & x\geq 0 \\
0 & x<0.
  \end{cases} 
\ee
\end{defn}

\begin{rem}
Note that this extension makes sense in $H^1(\R^+)$, and gives a new function $\hat v\in H^1(\R)$ since  $v(x=0)=0$ (cf. Lemma \ref{alta}). Also, note that $\widetilde Q_c$ in \eqref{soliton} cannot be considered as the zero extension of $Q_c$ in \eqref{Qc}. This interesting difference will be important for the stability proof.
\end{rem}

We will apply the extension property to the function $u(t)$ in \eqref{Modulation_1}. More precisely, for each $t\geq 0$, let $\hat u = \hat u(x,t)$ be the zero extension function of  $u(t)$ defined using \eqref{Extension_0}. Also, recall $\widetilde{Q}_c(x-\rho(t)-L)$, the natural extension of $Q_c(x-\rho(t)-L)$ obtained reversing \eqref{Qc}. 

\medskip

For further purposes, let us define 
\be\label{tz}
\tilde{z}(x,t):=\hat{u}(x,t)-\widetilde{Q}_c(x-\rho(t)-L).
\ee
Note that both $\tilde z$ and $\widetilde{Q}_c(x-\rho(t)-L)$ obey somehow ``natural'' extensions, however $\hat u$ follows a completely different extension (by zero). More precisely, note that 
\be\label{tz_real}
\tilde{z}(x,t) = -\widetilde{Q}_c(x-\rho(t)-L), \quad x\leq 0.
\ee
We have the following useful set of estimates:
\begin{lem}\label{elementarlemma}
For $\tilde z(t)$ defined in \eqref{tz}-\eqref{tz_real} and $t\geq 0$, we have 
\be\label{tz_H1}
\hbox{$\tilde z (t) \in H^1(\R)$,}
\ee
as well as
\be\label{Est_tz}
\|\tilde{z}(t)\|_{H^1(\R^-)} +\|\tilde{z}(t)\|_{L^\infty(\R^-)} \lesssim e^{-\sqrt{c} |\rho(t) +L|}.
\ee
Finally, we have the global estimate
\be\label{Est_tz_2}
\|\tilde{z}(t)\|_{H^1(\R)}  \lesssim \|z(t)\|_{H^1(\R^+)} + e^{-\sqrt{c} |\rho(t) +L|},
\ee
with implicit constants independent of $t\geq 0$ and $C_0$.
\end{lem}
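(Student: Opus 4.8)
The plan is to verify each of the three claims in turn, exploiting the explicit exponential decay of the KdV soliton $\widetilde Q_c$ given by \eqref{soliton} together with the modulation estimates of Lemma \ref{Modulation}. First, for \eqref{tz_H1}, I would argue that $\tilde z(t) \in H^1(\R)$ by splitting $\R = \R^- \cup \R^+$. On $\R^+$ we have $\tilde z(x,t) = u(x,t) - \widetilde Q_c(x-\rho(t)-L) = z(x,t)$ by \eqref{Modulation_1}, which lies in $H^1(\R^+)$ since $u(t)\in H^1(\R^+)$ and $\widetilde Q_c$ is smooth with all derivatives in $L^2$. On $\R^-$, by \eqref{tz_real} we have $\tilde z(x,t) = -\widetilde Q_c(x-\rho(t)-L)$, which is smooth and exponentially decaying, hence in $H^1(\R^-)$. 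The only subtlety is matching at $x=0$: one must check that $\tilde z$ has no jump there, i.e. that the trace from the right equals the trace from the left. From the left, $\tilde z(0^-,t) = -\widetilde Q_c(-\rho(t)-L)$. From the right, $\tilde z(0^+,t) = \hat u(0^+,t) - \widetilde Q_c(-\rho(t)-L) = u(0,t) - \widetilde Q_c(-\rho(t)-L) = -\widetilde Q_c(-\rho(t)-L)$, using the homogeneous boundary condition $u(0,t)=0$. The traces agree, so $\tilde z(t)$ is genuinely an $H^1(\R)$ function (this is exactly the point of Lemma \ref{alta}, which guarantees that an $H^1$ function vanishing at the endpoint extends across).

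Next, for \eqref{Est_tz}, on $\R^-$ we have $\tilde z(x,t) = -\widetilde Q_c(x-\rho(t)-L)$ exactly. Since $\widetilde Q_c(s) = \tfrac{3c}{2}\sech^2(\tfrac{\sqrt c}{2}s)$ decays like $e^{-\sqrt c |s|}$ together with its derivatives, and for $x\leq 0$ with $\rho(t)+L>0$ (which holds by the Remark following Lemma \ref{Modulation}, since $\rho$ is increasing and $\rho(0)\approx 0$, $L$ large) we have $|x-\rho(t)-L| = |x| + \rho(t)+L \geq \rho(t)+L = |\rho(t)+L|$, I can bound
\[
\|\widetilde Q_c(\cdot-\rho(t)-L)\|_{H^1(\R^-)}^2 + \|\widetilde Q_c(\cdot-\rho(t)-L)\|_{L^\infty(\R^-)}^2 \lesssim e^{-2\sqrt c\,|\rho(t)+L|},
\]
by a direct integration of the exponentially decaying tail. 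This gives \eqref{Est_tz}.

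Finally, \eqref{Est_tz_2} follows by combining the two pieces:
\[
\|\tilde z(t)\|_{H^1(\R)} \leq \|\tilde z(t)\|_{H^1(\R^+)} + \|\tilde z(t)\|_{H^1(\R^-)} = \|z(t)\|_{H^1(\R^+)} + \|\tilde z(t)\|_{H^1(\R^-)} \lesssim \|z(t)\|_{H^1(\R^+)} + e^{-\sqrt c\,|\rho(t)+L|},
\]
where the first equality uses $\tilde z|_{\R^+} = z$ and the last inequality uses \eqref{Est_tz}. The implicit constants depend only on $c$ and are independent of $t\geq 0$ and of $C_0$, since the soliton tail estimate is uniform and $z$ enters linearly. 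I expect the main (though still minor) obstacle to be the careful bookkeeping at $x=0$ in the first step — verifying that the zero extension $\hat u$ together with the non-zero-at-origin soliton $\widetilde Q_c$ combine into a function with matching traces — which is precisely why the homogeneous boundary condition $u(0,t)=0$ is essential here and why, as the Remark after Definition \ref{Extension_a} stresses, $\widetilde Q_c$ is \emph{not} the zero extension of $Q_c$.
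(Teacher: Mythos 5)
Your proof is correct and follows exactly the route the paper intends: the authors dispose of this lemma with the single line ``Direct from \eqref{tz_real} and \eqref{soliton}'', and your argument simply fills in those details --- the trace matching at $x=0$ via $u(0,t)=0$ and Lemma \ref{alta}, the exponential tail bound $|x-\rho(t)-L|\geq \rho(t)+L$ for $x\leq 0$, and the splitting of the $H^1(\R)$ norm over the two half-lines.
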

\begin{proof}
 Direct from \eqref{tz_real} and \eqref{soliton}.
\end{proof}

\bigskip

\section{Almost conserved quantities}

\medskip

Consider the decomposition of the dynamics \eqref{Modulation_1}. Under the condition $u(t) \in \mathcal M[C_0]$ in \eqref{bootstrap}, we know that $z(t)$ is a small perturbation of $Q_c$. Now we prove 

\begin{lem}[Energy and Mass expansions]\label{Expansions}
Recall the Mass $M[u]$ and Energy $E[u]$ defined in \eqref{Mass}-\eqref{Energy}. We have
\begin{equation}\label{eq3}
\begin{split}
M[u](t)=M[Q_c](t)+ \int_0^{+\infty}Q_c zdx+\frac{1}{2}\int_0^{+\infty}z^2dx,
\end{split}
\end{equation}
where $M[Q_c](t):= M[Q_c(\cdot -\rho(t) -L)]$ and $Q_c=Q_c(\cdot -\rho(t) -L)$. Similarly
\begin{equation}\label{eq1}
\begin{split}
E[u](t)=& ~ E[Q_c] (t)-\int_0^{+\infty}cQ_c zdx- Q_c(-\rho(t)-L)Q_c'(\rho(t)+L)\\
&+\frac12\int_0^{+\infty} (\partial_x z)^2dx-\int_0^{+\infty}Q_cz^2dx-\frac13\int_0^{+\infty}z^3dx.
\end{split}
\end{equation}
Here, $E[Q_c](t):= E[Q_c(\cdot -\rho(t) -L)]$. Finally, we have the following combined estimate:
\begin{equation}\label{tec1}
\begin{aligned}
& E[u](t)+cM[u](t)-E[Q_c](t)-cM[Q_c](t) = \\
& ~ \quad = O(e^{-2\sqrt{c}|\rho(t) +L|}) -\frac13\int_0^{+\infty}z^3dx\\
&~  \qquad +\frac{c}{2} \int_{0}^{+\infty} z^2dx+\frac12\int_{0}^{+\infty}(\partial_x z)^2dx\\
& ~ \qquad {}-\int_{0}^{+\infty} Q_c z^2dx.
\end{aligned}
\end{equation}
\end{lem}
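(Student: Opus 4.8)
The plan is to prove the three identities by direct substitution of the modulation decomposition \eqref{Modulation_1}, $u = Q_c(\cdot-\rho(t)-L) + z$, into the definitions \eqref{Mass} and \eqref{Energy}, and then combining. Throughout, write $Q_c = Q_c(\cdot-\rho(t)-L)$ for brevity. \textbf{Step 1 (Mass expansion).} Substitute into \eqref{Mass}:
\[
M[u] = \frac12\int_0^{+\infty}(Q_c + z)^2\,dx = \frac12\int_0^{+\infty}Q_c^2\,dx + \int_0^{+\infty}Q_c z\,dx + \frac12\int_0^{+\infty}z^2\,dx,
\]
which is exactly \eqref{eq3} once we recognize the first term as $M[Q_c](t)$. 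This step is purely algebraic and immediate.

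\textbf{Step 2 (Energy expansion).} Substitute into \eqref{Energy}. For the gradient term, $\frac12\int (\partial_x Q_c + \partial_x z)^2 = \frac12\int(\partial_x Q_c)^2 + \int \partial_x Q_c\,\partial_x z + \frac12\int(\partial_x z)^2$. For the cubic term, $-\frac13\int(Q_c+z)^3 = -\frac13\int Q_c^3 - \int Q_c^2 z - \int Q_c z^2 - \frac13\int z^3$. Now the key manipulation: integrate $\int_0^{+\infty}\partial_x Q_c\,\partial_x z\,dx$ by parts to get $-\int_0^{+\infty}\partial_x^2 Q_c\,z\,dx + [\partial_x Q_c\, z]_0^{+\infty}$. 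The boundary term at $x=0$ is $-\partial_x Q_c(-\rho(t)-L)\,z(0,t)$, but $z(0,t) = u(0,t) - Q_c(-\rho(t)-L) = -Q_c(-\rho(t)-L)$ since $u(0,t)=0$; this produces the term $-Q_c(-\rho(t)-L)Q_c'(\rho(t)+L)$ appearing in \eqref{eq1} (up to the reflection convention in the argument — I'd want to be careful with signs here, using $Q_c' (\rho(t)+L) = -\partial_x Q_c\big|_{x=0}$ or whatever the paper's convention dictates). Then use the soliton equation \eqref{soliton2}, $Q_c'' = cQ_c - Q_c^2$, so that $-\int \partial_x^2 Q_c\, z = -\int(cQ_c - Q_c^2)z = -c\int Q_c z + \int Q_c^2 z$. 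The $+\int Q_c^2 z$ term cancels the $-\int Q_c^2 z$ from the cubic expansion, leaving $-c\int Q_c z$, which matches \eqref{eq1}. Collecting all terms and identifying the $Q_c$-only part as $E[Q_c](t)$ gives \eqref{eq1}.

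\textbf{Step 3 (Combined estimate).} Form $E[u]+cM[u] - E[Q_c] - cM[Q_c]$ using \eqref{eq3} and \eqref{eq1}. The linear-in-$z$ terms are $c\int Q_c z$ (from $cM[u]$) and $-c\int Q_c z$ (from $E[u]$); these cancel exactly. What remains is $-Q_c(-\rho(t)-L)Q_c'(\rho(t)+L) + \frac12\int(\partial_x z)^2 - \int Q_c z^2 - \frac13\int z^3 + \frac{c}{2}\int z^2$. Finally, bound the scalar boundary term: from the explicit formula \eqref{soliton}, $Q_c(s) = \frac{3c}{2}\sech^2(\frac{\sqrt c\, s}{2})$ and its derivative both decay like $e^{-\sqrt c\,|s|}$, so $|Q_c(-\rho(t)-L)Q_c'(\rho(t)+L)| \lesssim e^{-2\sqrt c\,|\rho(t)+L|}$, which is the $O(e^{-2\sqrt c\,|\rho(t)+L|})$ in \eqref{tec1}. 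This yields \eqref{tec1}.

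The computations are all routine; the one place needing genuine care is \textbf{Step 2}, specifically the bookkeeping of the boundary term at $x=0$ in the integration by parts and its interaction with the reflection convention in \eqref{Qc} — one must correctly use $z(0,t)=-Q_c(-\rho(t)-L)$ (a consequence of the zero boundary condition $u(0,t)=0$, not of the orthogonality \eqref{Ortho}), and match signs against the paper's notation $Q_c'(\rho(t)+L)$ versus $\partial_x[Q_c(\cdot-\rho(t)-L)]\big|_{x=0}$. I do not anticipate a real obstacle beyond this sign/convention tracking; no estimate here is delicate, and the cancellation of the linear terms in Step 3 is automatic once Steps 1–2 are in place.
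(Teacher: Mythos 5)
Your proposal is correct and follows essentially the same route as the paper's own proof: direct substitution of the decomposition into $M$ and $E$, integration by parts of the cross term $\int_0^{+\infty}Q_c'\,\partial_x z\,dx$ with the boundary contribution at $x=0$ evaluated via $z(0,t)=-Q_c(-\rho(t)-L)$ (from the homogeneous boundary condition), the soliton ODE to reduce the linear terms to $-c\int Q_c z$, exact cancellation of these in $E+cM$, and the exponential decay of the soliton profile to absorb the scalar boundary term into $O(e^{-2\sqrt{c}|\rho(t)+L|})$. The only loose end is the sign bookkeeping you flagged, which resolves via the evenness of $\widetilde{Q}_c$ (so $Q_c'(-s)=-Q_c'(s)$), exactly as in the paper.
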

\begin{proof}See Appendix \ref{Expansiones}.
\end{proof}

Let us continue with the proof. Let $\tilde{z}$ as in \eqref{tz}. We have from \eqref{tec1},
\[
\begin{aligned}
&~ E[u](t)+cM[u](t)-E[Q_c](t)-cM[Q_c](t)  =\\
& \quad = ~ O(e^{-2\sqrt{c}|\rho(t) +L|})-\frac13 \int_0^{+\infty} z^3dx\\
&~ \qquad +\frac{c}{2} \int_{\R}\tilde{z}^2dx + \frac12\int_{\R} (\partial_x \tilde z)^2dx-\int_{\R}\widetilde{Q}_c\tilde{z}^2dx \\
&~{} \qquad -\frac{c}{2} \int_{-\infty}^0\tilde{z}^2dx-\frac12\int_{-\infty}^0   (\partial_x \tilde z)^2 dx +\int_{-\infty}^0 \widetilde{Q}_c\tilde{z}^2dx.
\end{aligned}
\] 
Using \eqref{tz_real} and \eqref{Est_tz}, we have
\[
\Bigg| -\frac{c}{2} \int_{-\infty}^0\tilde{z}^2dx-\frac12\int_{-\infty}^0   (\partial_x \tilde z)^2 dx +\int_{-\infty}^0 \widetilde{Q}_c\tilde{z}^2dx \Bigg| \lesssim e^{-2\sqrt{c} |\rho(t)+L|}.
\]
Consequently,
\be\label{Step_medio}
\begin{aligned}
& E[u](t)+cM[u](t)-E[Q_c](t)-cM[Q_c](t)  =\\
& \quad = ~ O(e^{-2\sqrt{c}|\rho(t) +L|})-\frac13\int_0^{+\infty}
z^3dx\\
&~ \qquad +\frac{c}{2} \int_{\R}\tilde{z}^2dx + \frac12\int_{\R} (\partial_x \tilde z)^2dx-\int_{\R}\widetilde{Q}_c\tilde{z}^2dx.
\end{aligned}
\ee
Now we need some control on the terms $E[Q_c](t)$ and $M[Q_c](t)$. Note that these terms are not conserved in time (because $Q_c$ in \eqref{soliton} does not satisfy the zero boundary condition at $x=0$), and we cannot use the Lemma \ref{dissipative} and \eqref{mass00}. However, with some standard procedure we can get independent estimates.

\begin{lem}
Recall the terms $M[Q_c]$ and $E[Q_c]$ in \eqref{Mass}-\eqref{Energy} and $Q_c=Q_c(\cdot -\rho(t) -L)$. We have, for all $t\geq 0$,
\be\label{MQ_c_EQ_c}
\begin{aligned}
 M[Q_c](t) \gtrsim &~ M(Q_c)(0)-e^{-2 \sqrt{c}(L+\rho(t))} -e^{- 2\sqrt{c}(L+\rho(0))}. \\
 E[Q_c](t) \gtrsim &~ E(Q_c)(0)-e^{- 2\sqrt{c}(L+\rho(t))}-e^{- 2\sqrt{c}(L+\rho(0))}.
\end{aligned}
\ee
Moreover, for all $t\geq 0$,
\be\label{Conclusion}
\begin{aligned}
- \Big(E[Q_c](t) +c M[Q_c](t) \Big) \lesssim &~  -\Big( E(Q_c)(0) +c M[Q_c](0) \Big) \\
& + e^{- 2\sqrt{c}(L+\rho(t))}+ e^{- 2\sqrt{c}(L+\rho(0))}.
\end{aligned}
\ee
\end{lem}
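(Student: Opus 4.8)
The plan is to control $M[Q_c](t)$ and $E[Q_c](t)$ directly, by an explicit change of variables that uses the closed form \eqref{soliton} of the profile $\widetilde{Q}_c$ and its exponential decay. Since $Q_c(\cdot-\rho(t)-L)$ is, by \eqref{Qc}, the restriction to $\R^+$ of $\widetilde{Q}_c(\cdot-\rho(t)-L)$, the substitution $y=x-\rho(t)-L$ in \eqref{Mass}--\eqref{Energy} gives
\[
M[Q_c](t) = \frac{1}{2} \int_{-\rho(t)-L}^{+\infty} \widetilde{Q}_c^2(y)\,dy,
\qquad
E[Q_c](t) = \int_{-\rho(t)-L}^{+\infty}\Big(\frac{1}{2} (\widetilde{Q}_c')^2(y) - \frac{1}{3} \widetilde{Q}_c^3(y)\Big)\,dy.
\]
First I would single out the full-line quantities $M_\infty:=\frac{1}{2}\int_{\R}\widetilde{Q}_c^2$ and $E_\infty:=\int_{\R}\big(\frac{1}{2}(\widetilde{Q}_c')^2-\frac{1}{3}\widetilde{Q}_c^3\big)$, which are finite constants that do not depend on $t$. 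The differences $M_\infty-M[Q_c](t)$ and $E_\infty-E[Q_c](t)$ are then exactly the corresponding left tails, i.e.\ the integrals over $(-\infty,-\rho(t)-L)$.

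Next I would estimate these tails. From \eqref{soliton}, for $y$ large and negative one has $\widetilde{Q}_c(y),\ |\widetilde{Q}_c'(y)|\lesssim e^{-\sqrt{c}|y|}$, hence $\widetilde{Q}_c^2(y),\ (\widetilde{Q}_c')^2(y),\ |\widetilde{Q}_c(y)|^3\lesssim e^{-2\sqrt{c}|y|}$; integrating $e^{-2\sqrt{c}|y|}$ over $(-\infty,-\rho(t)-L)$ produces a factor $\lesssim e^{-2\sqrt{c}(\rho(t)+L)}$. Here I use, as recorded in the remark following Lemma \ref{Modulation}, that $\rho$ is increasing, so $\rho(t)+L\geq\rho(0)+L>0$ and the tail estimate is uniform in $t\geq0$. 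This yields
\[
M[Q_c](t)=M_\infty+O\big(e^{-2\sqrt{c}(\rho(t)+L)}\big),
\qquad
E[Q_c](t)=E_\infty+O\big(e^{-2\sqrt{c}(\rho(t)+L)}\big),
\]
uniformly for $t\geq0$.

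Finally, writing these identities at time $t$ and at time $0$ and subtracting cancels the constants $M_\infty,E_\infty$, leaving
\[
M[Q_c](t)-M[Q_c](0)=O\big(e^{-2\sqrt{c}(\rho(t)+L)}\big)+O\big(e^{-2\sqrt{c}(\rho(0)+L)}\big),
\]
together with the analogous identity for $E$; these are exactly \eqref{MQ_c_EQ_c}, with $\gtrsim$ absorbing the implicit constants. For \eqref{Conclusion} I would simply take the linear combination $E[Q_c](t)+cM[Q_c](t)=E_\infty+cM_\infty+O(e^{-2\sqrt{c}(\rho(t)+L)})$, evaluate it at $t$ and at $0$, subtract, and multiply by $-1$.

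This is essentially a bookkeeping computation and I do not expect a genuine obstacle; the one point that deserves emphasis — and which is precisely why the lemma is needed — is that $M[Q_c](t)$ and $E[Q_c](t)$ are \emph{not} conserved (the text already notes that Lemma \ref{dissipative} does not apply here, since $Q_c$ fails the homogeneous boundary condition at $x=0$), so the argument cannot lean on any conservation law and must go through the explicit tail bound above. Equivalently, one could differentiate in time: using $\partial_t\big(Q_c(\cdot-\rho(t)-L)\big)=-\rho'(t)\,\partial_x Q_c(\cdot-\rho(t)-L)$ and integrating by parts on $(0,+\infty)$, the time derivatives of $M[Q_c]$ and $E[Q_c]$ reduce to boundary terms at $x=0$ of size $\rho'(t)\,O(e^{-2\sqrt{c}(\rho(t)+L)})$ by \eqref{soliton}, and $\rho'$ is uniformly bounded by \eqref{rho_estimate}, so integrating in $t$ recovers the same conclusion.
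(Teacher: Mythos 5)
Your proposal is correct and follows essentially the same route as the paper: the authors also write $M[Q_c](t)$ and $E[Q_c](t)$ as the (translation-invariant) full-line integrals minus the left tails over $(-\infty,0)$, and then bound those tails by $e^{-2\sqrt{c}(L+\rho(t))}$ using the exponential decay of $\widetilde{Q}_c$ recorded in Lemma \ref{elementarlemma}. The only cosmetic difference is that you phrase the cancellation through the constants $M_\infty,E_\infty$ after a change of variables, whereas the paper subtracts the time-$0$ expression directly; the content is identical.
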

\begin{proof}
We easily have  
\begin{equation*}
	\begin{split}
		M[Q_c](t)&=\frac 12 \int_{\R} \widetilde{Q}_c^2(x-\rho(t)-L)dx-\frac 12\int_{-\infty}^0\widetilde{Q}_c^2(x-\rho(t)-L)dx\\
		&=\frac 12 \int_{\R} \widetilde{Q}_c^2(x-\rho(0)-L)dx-\frac12 \int_{-\infty}^0\widetilde{Q}_c^2(x-\rho(t)-L)dx\\
		&=M[Q_c](0)+\frac 12 \int_{-\infty}^0 \tilde{Q}_c^2(x-\rho(0)-L)dx-\frac12 \int_{-\infty}^0\widetilde{Q}_c^2(x-\rho(t)-L)dx.
	\end{split}
\end{equation*}
Thus the first estimate follows by Lemma \ref{elementarlemma}.
On the other hand, $E[Q_c](t)$ can be easily estimated by
\begin{equation*}
\begin{split}
E[Q_c](t)&=\int_{\R} \Big( \frac12 \widetilde{Q}_c'^2(t) -\frac13 \widetilde{Q}_c^3(t)\Big)dx-\int_{-\infty}^0 \Big( \frac12 \widetilde{Q}_c'^2(t) -\frac13 \widetilde{Q}_c^3(t)\Big)dx\\
&=\int_{\R} \Big( \frac12 \widetilde{Q}_c'^2(t=0) -\frac13 \widetilde{Q}_c^3(t=0)\Big)dx-\int_{-\infty}^0 \Big( \frac12 \widetilde{Q}_c'^2(t) -\frac13 \widetilde{Q}_c^3(t)\Big)dx\\
&= E[Q_c](0)+\int_{-\infty}^0 \Big( \frac12 \widetilde{Q}_c'^2(t=0) -\frac13 \widetilde{Q}_c^3(t=0)\Big)dx \\
& \qquad -\int_{-\infty}^0 \Big( \frac12 \widetilde{Q}_c'^2(t) -\frac13 \widetilde{Q}_c^3(t)\Big)dx.
\end{split}
\end{equation*}
Recall $\widetilde Q_c = \widetilde Q_c(x-\rho(t) -L)$, where $\widetilde Q_c$ is given in \eqref{soliton}. It follows naturally from Lemma \ref{elementarlemma} that for each $t\geq 0$, 
\[
E(Q_c)(t)\gtrsim E(Q_c)(0)-e^{-2 \sqrt{c}(L+\rho(t))}-e^{-2 \sqrt{c}(L+\rho(0))}.
\]
This proves the last estimate in \eqref{MQ_c_EQ_c}.
%
\end{proof}

Now, combining \eqref{Step_medio} and \eqref{Conclusion},  we obtain that
\be\label{PaPa}
\begin{aligned}
E[u](t)+cM[u](t) = &~ \frac12 \Big( \int_{\R} (\partial_x \tilde z)^2dx + c \int_{\R}\tilde{z}^2dx -2\int_{\R}\widetilde{Q}_c\tilde{z}^2dx\Big)\\
&~  -\frac13\int_0^{+\infty} z^3dx -O(e^{-2 \sqrt{c}(\rho(t)+L)}+e^{-2 \sqrt{c}(\rho(0)+L)}).
\end{aligned}
\ee
On the other hand, Lemma \ref{dissipative} implies
\begin{equation}\label{mass01}
M[u](t)\leq M[u_0]\; \text{ and }\; E[u](t)\leq E[u_0],
\end{equation}
so that 
\[
E[u](t)+cM[u](t) - (E[u](0)+cM[u](0))\leq 0.
\]
Therefore, from this last inequality, \eqref{PaPa}, \eqref{Est_tz_2} and \eqref{z0},
\[
 \begin{aligned}
 \int_{\R} \Big( (\partial_x \tilde z)^2+ c \tilde{z}^2-2 \widetilde{Q}_c\tilde{z}^2\Big) dx \lesssim  &~   \|\tilde z(0)\|_{H^1(\R)}^2 +  \int_0^{+\infty} |z|^3dx \\
 & ~ +e^{-2 \sqrt{c}(L+\rho(t))}+e^{- 2\sqrt{c}(L+\rho(0))}\\
 \lesssim  &~  \|z(0)\|_{H^1(\R^+)}^2  + \|z(t)\|_{H^1(\R^+)}^3  \\
 &~ \quad  +e^{-2 \sqrt{c}(L+\rho(t))}+e^{- 2\sqrt{c}(L+\rho(0))} \\
 \lesssim  &~  \al^2 + \|  z(t)\|_{H^1(\R^+)}^3 \\
 &~ \quad + e^{-2 \sqrt{c}(L+\rho(t))}+e^{- 2\sqrt{c}(L+\rho(0))}.
 \end{aligned}
\]
Consequently,
\be\label{HH}
 \int_{\R} \Big( (\partial_x \tilde z)^2+ c \tilde{z}^2-2 \widetilde{Q}_c\tilde{z}^2\Big) dx \lesssim  \al^2 + \|  z(t)\|_{H^1(\R^+)}^3+e^{- 2\sqrt{c}L}.
\ee
The purpose of the next paragraph is to get a suitable lower bound on the term
\[
 \int_{\R} \Big( (\partial_x \tilde z)^2+ c \tilde{z}^2-2 \widetilde{Q}_c\tilde{z}^2\Big) dx = :  \int_{\R} \tilde z \mathcal L \tilde z dx,
\]
where 
\be\label{mL}
\mathcal{L}\tilde z:=-\partial_x^2 \tilde z+c \tilde z-2\widetilde{Q}_c\tilde z.
\ee
Note that we have reduced the problem on the half-line to an extended spectral problem on the whole line, and where we have good estimates on the left half-line portion of $\tilde z(t)$. 

\bigskip

\section{End of proof of Theorem \ref{teorema}} 

\medskip

Let us start out with the following easy estimate:
\begin{Cl}
We have, for all $t\geq 0$,
\be\label{Ayuda1}
\Bigg|\int_{-\infty}^0 \tilde z(x,t)\widetilde{Q}_c'(x-\rho(t)-L)dx\Bigg| \lesssim e^{-2\sqrt{c}|\rho(t) +L|}.
\ee
\end{Cl}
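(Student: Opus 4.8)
The plan is to exploit the explicit structure of $\tilde z$ on the left half-line, where by \eqref{tz_real} we have $\tilde z(x,t) = -\widetilde Q_c(x-\rho(t)-L)$ for all $x\le 0$. This means the integral in question is completely explicit: on $(-\infty,0]$ it equals $-\int_{-\infty}^0 \widetilde Q_c(x-\rho(t)-L)\widetilde Q_c'(x-\rho(t)-L)\,dx$. First I would substitute $y = x-\rho(t)-L$, which turns the integral into $-\int_{-\infty}^{-\rho(t)-L} \widetilde Q_c(y)\widetilde Q_c'(y)\,dy = -\tfrac12 \widetilde Q_c^2(y)\big|_{-\infty}^{-\rho(t)-L} = -\tfrac12 \widetilde Q_c^2(-\rho(t)-L)$, since $\widetilde Q_c$ decays at $-\infty$.

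Next I would use the explicit formula \eqref{soliton}, namely $\widetilde Q_c(s) = \tfrac{3c}{2}\sech^2(\tfrac{\sqrt c\, s}{2})$, to bound $\widetilde Q_c^2(-\rho(t)-L) \lesssim e^{-2\sqrt c\,|\rho(t)+L|}$. This is the elementary exponential decay bound for $\sech^2$, already used implicitly in Lemma \ref{elementarlemma} (estimate \eqref{Est_tz}): since $\sech(s) \le 2e^{-|s|}$, we get $\widetilde Q_c^2(s) \lesssim e^{-2\sqrt c\,|s|}$, and evaluating at $s = -\rho(t)-L$ gives exactly the claimed right-hand side. Putting the two steps together yields $\big|\int_{-\infty}^0 \tilde z(x,t)\widetilde Q_c'(x-\rho(t)-L)\,dx\big| = \tfrac12 \widetilde Q_c^2(-\rho(t)-L) \lesssim e^{-2\sqrt c\,|\rho(t)+L|}$, which is \eqref{Ayuda1}.

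There is essentially no obstacle here; this is a routine computation whose only content is recognizing that $\tilde z$ restricted to $x\le 0$ is exactly minus a shifted soliton, so that the integrand is a perfect derivative. The mild subtlety worth a sentence is that one should note $\rho(t) + L > 0$ for all $t\ge 0$ (which follows from $\rho(0) \approx 0$, $L > L_0$ large, and the lower bound on $\rho'$ from \eqref{rho_estimate} recorded in the Remark after Lemma \ref{Modulation}), so that $|\rho(t)+L| = \rho(t)+L$ and the bound could equivalently be written $e^{-2\sqrt c(\rho(t)+L)}$; this consistency with the notation in \eqref{Est_tz} and \eqref{HH} is the only thing to keep track of. The claim will then feed into the spectral analysis of $\mathcal L$ by showing that $\tilde z(t)$ is almost orthogonal — up to exponentially small errors — to the translation direction $\widetilde Q_c'$, which together with the modulation orthogonality \eqref{Ortho} on the right half-line lets one apply the standard coercivity estimate for $\mathcal L$ to the full-line function $\tilde z(t)$.
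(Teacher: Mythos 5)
Your proof is correct and takes essentially the same route as the paper, whose one-line argument simply invokes \eqref{tz_real} together with the exponential smallness of the soliton tail on $\R^-$ recorded in \eqref{Est_tz}. The only (harmless) difference is that you evaluate the integral exactly as the perfect derivative $-\tfrac12\widetilde Q_c^2(-\rho(t)-L)$, whereas the paper implicitly applies Cauchy--Schwarz with the two $e^{-\sqrt{c}\,|\rho(t)+L|}$ factors; both yield \eqref{Ayuda1}.
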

\begin{proof}
Direct from Lemma \ref{elementarlemma} and \eqref{Est_tz}. 
\end{proof}
From \cite{bona1,Weinstein} (see also \cite{Munoz} for more details), we have the standard coercivity estimate valid for each $\tilde z \in H^1(\R)$, $Q_c=Q_c(x-\rho(t)-L)$ and $\mathcal L$ as in \eqref{mL}:

\[
\begin{aligned}
\int_{\R} \tilde z \mathcal L \tilde z dx \gtrsim &~  \| \tilde z\|_{H^1(\R)}^2  - \Bigg|\int_{\R} \tilde z(x,t)\widetilde{Q}_c'(x-\rho(t)-L)dx\Bigg|^2 \\
&~  - \Bigg|\int_{\R} \tilde z(x,t)\widetilde Q_c(x-\rho(t)-L)dx\Bigg|^2.
\end{aligned}
\]
(Note that both quadratic reminder terms above are not zero in our case, but both are very small.) Using Lemma \ref{Modulation} and \eqref{Ayuda1} , we have
\[
\begin{aligned}
\int_{\R} \tilde z \mathcal L \tilde z dx \gtrsim &~  \| \tilde z\|_{H^1(\R)}^2   - \Bigg|\int_{\R} \tilde z(x,t)\widetilde{Q}_c(x-\rho(t)-L)dx\Bigg|^2 -e^{-2\sqrt{c}|\rho(t) +L|} \\
\gtrsim &~  \| z\|_{H^1(\R^+)}^2   - \Bigg|\int_{0}^{+\infty}  z(x,t)Q_c(x-\rho(t)-L)dx\Bigg|^2 -  e^{-2\sqrt{c}|\rho(t) +L|}.
\end{aligned}
\]
On the other hand we have
\begin{equation*}
\begin{split}
\int_0^{+\infty}z^2(x,t)dx=& \int_0^{+\infty}u^2(x,t)dx+M[Q_c](t)\\
&-2\int_0^{+\infty}z(x,t) Q_c(x-\rho(t)-L)dx.
\end{split}
\end{equation*}
Using this last expression, \eqref{mass00} and \eqref{MQ_c_EQ_c} we obtain
\begin{equation}\label{tec6}
 \Bigg|\int_{0}^{+\infty}  z(x,t)Q_c(x-\rho(t)-L)dx\Bigg| \lesssim \| z(0)\|_{H^1(\R^+)}+  \| z(t)\|_{H^1(\R^+)}^2 + e^{-2\sqrt{c}L}.
\end{equation}
Therefore, for $\al$ small and $L$ large,
\[
\int_{\R} \tilde z \mathcal L \tilde z dx \gtrsim  \| z\|_{H^1(\R^+)}^2  -  C(\al^2 + e^{-2\sqrt{c}L}).
\]
%
%
Combining this last estimate \eqref{tec1}  and \eqref{HH},  we obtain
\begin{equation}
\|z(t)\|_{H^1(\R^+)}^2 \lesssim \al^2 + e^{-2\sqrt{c}L} + \|z(t)\|_{H^1(\R^+)}^3,
\end{equation}	
with constants independent of $C_0$, which implies $u(t) \in \mathcal M[C_0/2]$ for $C_0,L$ large enough, and $\al$ small.

\medskip

\appendix

\section{Proof of Lemma \ref{Expansions}}\label{Expansiones} 

In this section we prove the expansions obtained in Lemma \ref{Expansions}.

\begin{proof}[Proof of Lemma \ref{Expansions}]
We compute: by definition of $z$ we see that
\begin{equation}\label{eq3a}
\begin{split}
cM[u](t)=cM[Q_c]+c\int_0^{+\infty}Q_czdx+\frac{c}{2}\int_0^{+\infty}z^2dx.
\end{split}
\end{equation}
On the other hand,
\begin{equation}\label{eq1a}
\begin{split}
E[u](t)=&E[Q_c]+\int_0^{+\infty}Q_c'z_xdx-\int_0^{\infty}Q_c^2zdx\\
&+\frac12\int_0^{+\infty}z_x^2dx-\int_0^{\infty}Q_cz^2dx-\frac13\int_0^{+\infty}z^3dx.
\end{split}
\end{equation}
Integrating by parts and using \eqref{soliton} we see that
\[
\begin{split}
\int_0^{+\infty}(Q_c'z_x-Q_c^2z)dx&=\int_0^{+\infty}(-Q_c''-Q_c^2)zdx-Q_c'(-\rho(t)-L)z(0,t)\\
&=\int_0^{+\infty}(-Q_c''-Q_c^2)zdx \\
&\quad + Q_c'(-\rho(t)-L)(f(t)-Q_c(-\rho(t)-L)),
\end{split}.
\]
therefore
\be\label{eq2}
\int_0^{+\infty}(Q_c'z_x-Q_c^2z)dx=-\int_0^{+\infty}cQ_czdx- Q_c(\rho(t)+L)Q_c'(\rho(t)+L).
\ee
Combining \eqref{eq1a}, \eqref{eq2} and \eqref{eq3a} we get
\begin{equation}\label{tec100}
\begin{aligned}
& E[u](t)+cM[u](t)-(E[Q_c](t)+cM[Q_c](t))=\\
&= ~ {}-Q_c(-\rho(t)-L)Q_c'(-\rho(t)-L)-\frac13\int_0^{+\infty}z^3dx\\
&~\qquad ~ +\frac{c}{2} \int_0^{+\infty}z^2dx+\frac12\int_0^{+\infty}z_x^2dx\\
&~\qquad ~ {}-\int_0^{+\infty}Q_cz^2dx.
\end{aligned}
\end{equation}
Note now that we easily have the pointwise estimate
\[
\abs{Q_c(-\rho(t)-L)Q_c'(-\rho(t)-L)} \leq C e^{-2\sqrt{c}|\rho(t) +L|}.
\]
%
Replacing this last estimate in \eqref{tec100}, we get the desired bound \eqref{tec1}. The proof is complete.
\end{proof}

\bigskip

\section{Proof of Lemma \ref{Modulation}}\label{A}

\medskip

The proof is standard. We have from \eqref{bootstrap} that for all $t\geq 0$,
\be\label{MaMa}
\inf_{\rho_0\in \R} \|u(t) -Q_c(\cdot-\rho_0 -L)\|_{H^1(\R^+)}<C_0(\alpha+e^{-L}).
\ee
if $C_0$ is large. Therefore, if we define the functional $\mathcal F =\mathcal F[v,\rho_0]$ by
\[
\begin{aligned}
& H^1(\R^+) \times (-L,\infty) \ni (v, \rho_0) \mapsto \\
& \qquad  \int_0^{+\infty} (v(x)- Q_c(x-\rho_0-L)) Q_c' (x-\rho_0-L)dx \in \R.
\end{aligned}
\]
it is not difficult to see that it is of class $C^1$ and $\mathcal F[Q_c(\cdot -\rho_0-L),\rho_0] =0$ for all $\rho_0\in (-L,\infty)$. Consequently, since
\[
\begin{aligned}
\frac{\partial}{\partial \rho_0} \mathcal F[v,\rho_0] \Big|_{v=Q_c(\cdot -\rho_0-L)} = &~  \int_0^{+\infty} Q_c'^2 (x-\rho_0-L)dx  \\
=&~ \int_{-\rho_0-L}^{+\infty} Q_c'^2  \geq \int_0^{+\infty}Q_c'^2 >0,
\end{aligned}
\]
by the Implicit Function Theorem we have that for all $v\in H^1(\R^+)$ such that $\|v -Q_c(\cdot-\rho_0 -L)\|_{H^1(\R^+)}<\delta_0$, there exists $\rho_0=\rho_0(v)>-L$ for which
\[
\int_0^{+\infty} (v(x)- Q_c(x-\rho_0-L)) Q_c' (x-\rho_0-L)dx =0.
\]
Using \eqref{MaMa} for small $\alpha$ and large $L$ if necessary, we have $C_0(\alpha+e^{-L})<\delta_0$, from which there exists $\rho(t)$ such that for all $t\geq 0$,
\[
\int_0^{+\infty} z(x,t) Q_c' (x-\rho(t)-L)dx =0, \quad  z(x,t) := u(x,t)- Q_c(x-\rho(t)-L). 
\]
The rest of the proof is standard, see e.g. \cite{MMT}.

\bigskip

%

\end{document}